\newtheorem{theorem}{Theorem}
\newtheorem{lemma}{Lemma}
\title{Application of neural-network hybrid models in estimating the infection functions of nonlinear epidemic models}
\author[a]{Chentong Li}
\author[b]{Changsheng Zhou}
\author[c]{Junmin Liu}
\author[d]{Yao Rong}
\affil[a]{Guangdong Key Laboratory of Modern Control Technology, Institute of Intelligent Manufacturing, 
Guangdong Academy of Science, Guangzhou, Guangdong 510070, P.R.China}
\affil[b]{School of Mathematics and Information science, Guangzhou University,
 Guangzhou, Guangdong 510006, P.R.China}
\affil[c]{School of Mathematics and Statistics, Xi'an Jiaotong University,
 Xi'an, Shaanxi 710049, P.R.China}
\affil[d]{College of Engineering Physics, Shenzhen Technology University, Shenzhen, Guangdong  518118, P.R.China }
\date{}
\begin{document}

\maketitle

\begin{abstract}
Hybrid neural-network models combine the advantages of a neural network’s fitting functionality with differential equation models to reflect actual physical processes and are widely used in analyzing time-series data. Most related studies have focused on linear hybrid models, but only a few have examined nonlinear problems. In this work, we use a hybrid nonlinear epidemic neural network as the entry point to study its power in predicting the correct infection function of an epidemic model. To achieve this goal, we combine the bifurcation theory of the nonlinear differential model with the mean-squared error loss and design a novel loss function to ensure model trainability. Furthermore, we find the unique existence conditions supporting ordinary differential equations to estimate the correct infection function. Using the Runge--Kutta method, we perform numerical experiments on our proposed model and verify its soundness. We also apply it to real COVID-19 data to accurately discover the change law of its infectivity. 

{{\bf Key Words:} Differential equations; Epidemic model; Hybrid model; Neural network. }
\end{abstract}

\section{Introduction}

Forecasting an epidemic’s infection number and rate is a key objective in the study of infectious diseases. The ordinary differential equation model of infectious disease is based on known disease properties and has long been used to analyze the dynamic behaviors of infection and to provide forecasts \cite{du2017evolution, anderson1992infectious, hufnagel2004forecast}. All infectious disease models can be divided into two parts: infection terminology and susceptibility  \cite{anderson1992infectious, ma2009modeling}. The infected terms are used to illustrate the change laws of the number of infected persons at different periods (e.g., incubation, infection, and isolation). Most epidemic models use nonlinear infection functions. However, nonlinearity quickly spawns from the many combinations of infection and susceptibility terms \cite{ma2009modeling}. To deal with the difficulties of nonlinearity, researchers usually use the bifurcation theory of nonlinear equations to make estimations; then, they improve upon them with expert knowledge \cite{ma2009modeling}. 

To implement nonlinear epidemic models, the basic reproduction bifurcation number, $R_0$, comprises the parameters and functions of the model \cite{wang2008threshold}. This number is used to describe the expected number of infected cases based on one original infected person \cite{wang2012basic}. Used as a biological descriptor, $R_0$ is the threshold value used to determine whether the disease may vanish. In many epidemic models, when $R_0$ is larger than $1$, the epidemic will persist forever; if it is smaller than $1$, the number of infected individuals will approach $0$ over time \cite{wang2012basic}. This persistence description is called ``forward bifurcation’’ \cite{martcheva2019methods} and is the key output of epidemic modeling.

Although nonlinear epidemic model analysis has come a long way, selecting the correct infection function (also known as the infection incidence) remains prohibitively difficult \cite{zhang2007periodic,smith1983multiple,alexander2004periodicity, cui2008impact, ruan2003dynamical, xiao2007global}. The functions not only pertain to the number of susceptible and infected individuals, but they also involve environmental and human behavior factors. For example, the periodical temperature changes in temperate zones may influence the activity of viral proteins, causing infectability to reflect a periodical property \cite{zhang2007periodic, smith1983multiple}. Additionally, when the infected number grows, the growth rate of infectability may become smaller, depending on human behaviors. Mathematically, the Holling type function is applied to the infection function to account for such behavior. However, the many combinations of factors create considerable uncertainty \cite{ruan2003dynamical,xiao2007global}. Thus, any inaccuracy in the choice of infection function directly influences the precision of model prediction .

Some statistical methods have been applied to real epidemic data using Akaike's information criterion (AIC) \cite{yamaoka1978application, yamashita2007stepwise} or its second-order estimate (AICC) \cite{hurvich1989regression}. Using these, one can ostensibly find the most suitable function from a group of candidates. In many cases, however, relying on the available data is insufficient, and expert knowledge of the environment, human behavior, and the properties of the viral strain is also needed. Therefore, better methods are necessary.

Notably, improvements to deep learning \cite{lecun2015deep} and related neural-network models have greatly enhanced the predictability of nonlinear functions throughout modern industry. A neural network is modeled after the neuromorphology of the human brain using network to construct input layers, output layers, neurons, and activation functions (synapses) at each layer. As with many physical functions, it sacrifices explainability for precision. Hence, at its heart is a black-box function that is trained to ``think’’ in different dimensions \cite{hill1994artificial}.  According to the universal approximation theory \cite{hornik1989multilayer}, a neural network can approximate any functions given the ssoftwareuitable activation functions. Therefore, if we can build a neural network to implement the differential equations of an infectious disease model, infection model prediction should become much more accurate. 

As introduced by Psichogios and Ungar \cite{psichogios1992hybrid}, combining capabilities in this way results in a ``hybrid’’ neural-network model. The authors built a hybrid model to forecast biological phenomena. Later, \cite{guo2001simulation, lee1996hybrid} used them to predict the change laws of time-series data. Recently, the authors of \cite{ren2019hybrid, razmjooy2018hybrid, wu2020drug} had breakthrough success in forecasting biological processes. Notably, biological systems contain both first-principle information (i.e., biophysical processes) and unknown information, which accounts for the complexity of a biophysical system. Hence, the problem now reduces to properly fitting the data to the biophysical process.

Most hybrid neural-network studies have leveraged the linear-first principle to effectively side-step overly complex nonlinear preparation methods. Thus, questions have been raised about whether the hybrid functions can be adequately trained with limited data and whether the estimates are actually reliable. In this work, based on the standard theorem of forwarding bifurcation, a loss function is created that overcomes the untrainability of the nonlinear epidemic model. Furthermore, based on the unique conditions of ordinary differential equations and the basic properties of epidemic models, we provide the conditions needed to predict the correct infection function.

The remainder of this work is organized as follows. In the second section, we provide complete descriptions of the nonlinear epidemic system and introduce the basic conditions of ordinary differential epidemic modeling. In the third section, the two theorems that can solve the trainable and estimable problems of the neural network are explained. In section four, based on the standard Runge--Kutta method of numerically solving ordinary differential equations, we introduce a method of properly training the hybrid neural network. In the fifth section, the results of four numerical experiments are reported to demonstrate the efficacy of the proposed model in estimating periodical and Holling-type infection functions. Then, in the sixth section, we report on the application of the proposed method to real data. In the last section, we conclude this work and discuss future opportunities.

\section{Hybrid epidemic model properties and conditions}

The hybrid nonlinear epidemic neural network model is written as follows:
\begin{equation}
    x'=g(x,f),
    \label{eq:1}
\end{equation}
where $x=(x_S,x_I)$ and $x_S\in R^{m_1}$ are the variables of a susceptible person, and $x_I\in R^{m_2}$ are those of an infected person. Function $f$ maps time $t$ or variables $x$ into $R$. Function $g=(g_S,g_I)$, where $g_S$ is the function mapping of $x_S$ and the output of function $f$ from $R^{m_1+1}$ to $R^{m_1}$. Additionally, $g_I$ is the function mapping of $x_I$ and the output of function $f$ from $R^{m_2+1}$ to $R^{m_2}$. Furthermore, we assume that function $g$ satisfies the following three conditions:

(C1) $\forall x_1\neq x_2$, there exist a positive constant, $L_1$, such that $\|g(x_1,y)-g(x_2,y)\|<L_1\|x_1-x_2\|$; 

(C2) $\forall y_1\neq y_2$, there exist a positive constant, $L_2$, such that $\|g(x,y_1)-g(x,y_2)\|<L_2\|y_1-y_2\|$;

(C3) if $x_I \neq 0$, then $\forall y_1\neq y_2$, and the in-equation $\|g_I(x,y_1)-g_I(x,y_2)\| > 0$ hold.

Condition (C1) is the famous Lipschitz condition of the ordinary differential equation and is sufficient for the equation’s unique existence \cite{walter1998ordinary}. Condition (C2) is the Lipschitz condition for the function, which, when used as the parameter, accommodates the unique existence of its corresponding ordinary differential equation. In the proofs shown in the next section, we replace $L_1$ and $L_2$ in (C1) and (C2), respectively, with $L=\max(L_1,L_2)$. Condition (C3) guarantees that, when $x_I \neq 0$, map $g$ is the injective map for $y$. These three conditions are used in the upcoming proofs.

Let $x(x_0,f,t)=(x_S(x_0,f,t),x_I(x_0,f,t))$ be the solution to Equation \eqref{eq:1} with respect to function $f$ and the initial condition, $x_0$. Then, based on the standard of nonlinear epidemic models \cite{ma2009modeling}, we can assume that two more conditions are satisfied by the system of Equation \eqref{eq:1}.

Condition (C4), without $x_I$ in system $x_S'=g_S(x_S,f)$, has the unique positive equilibrium, $S^*$.

Condition (C5) allows vector $0\in R^{m_2}$ to become the zero vector, and when $x_I=0$, $g_I(x,f)=0$.

Conditions (C4) and (C5) are built upon the properties of infectious diseases. If no disease exists, the system only contains susceptible individuals, and the system has a unique positive state. This, the without-disease system will remain in a stable no-disease state until a new infected person enters the system. Therefore, the system of Equation \eqref{eq:1} has a unique disease-free equilibrium, $x_{DFE}=(S^*,0)$.

\begin{figure}[htbp]
  \centering
  \subcaptionbox{Forward bifurcation diagram.}{\includegraphics[width=0.48\textwidth]{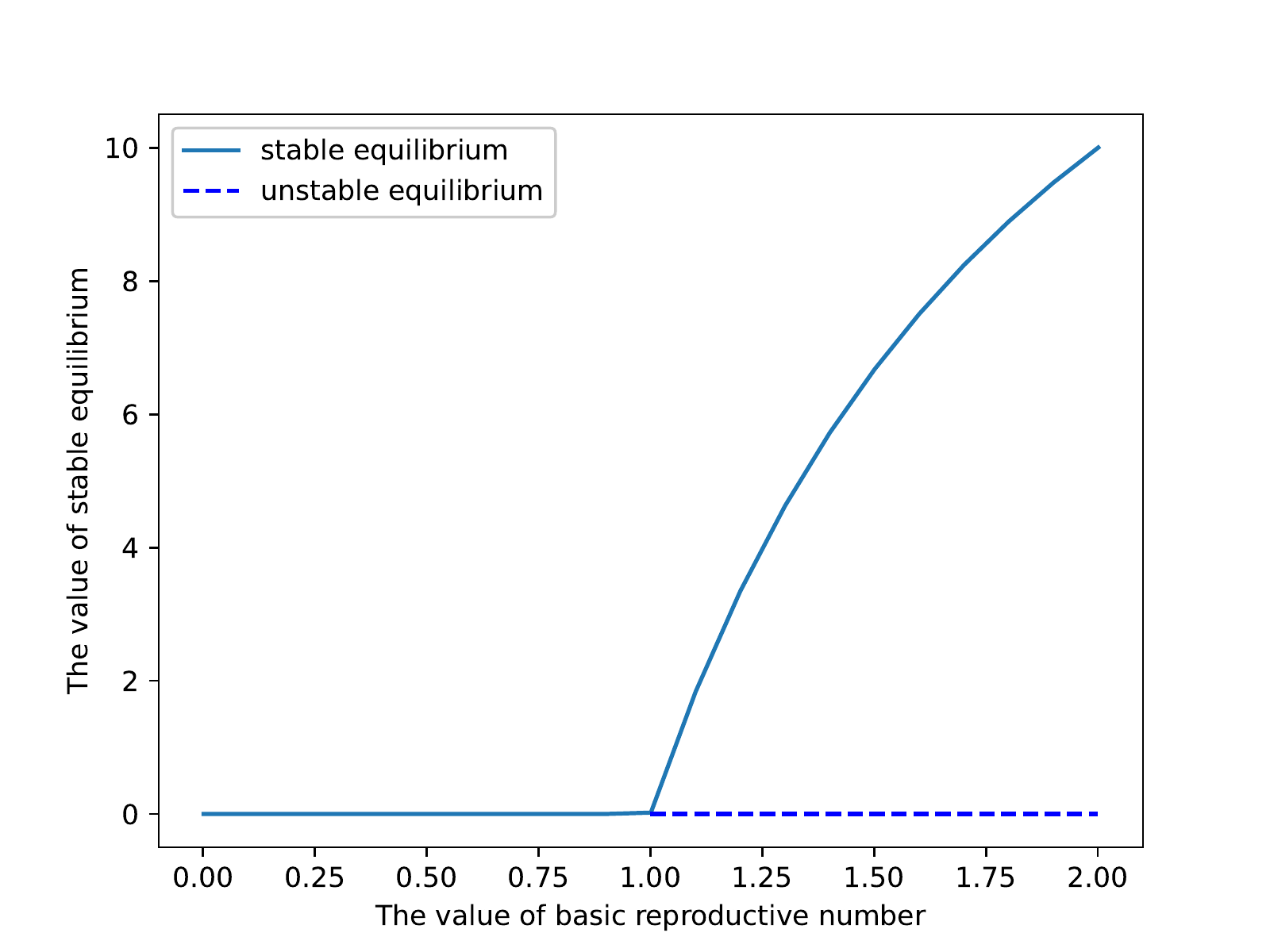}}
  \hfill
  \subcaptionbox{Solutions under different conditions.}{\includegraphics[width=0.48\textwidth]{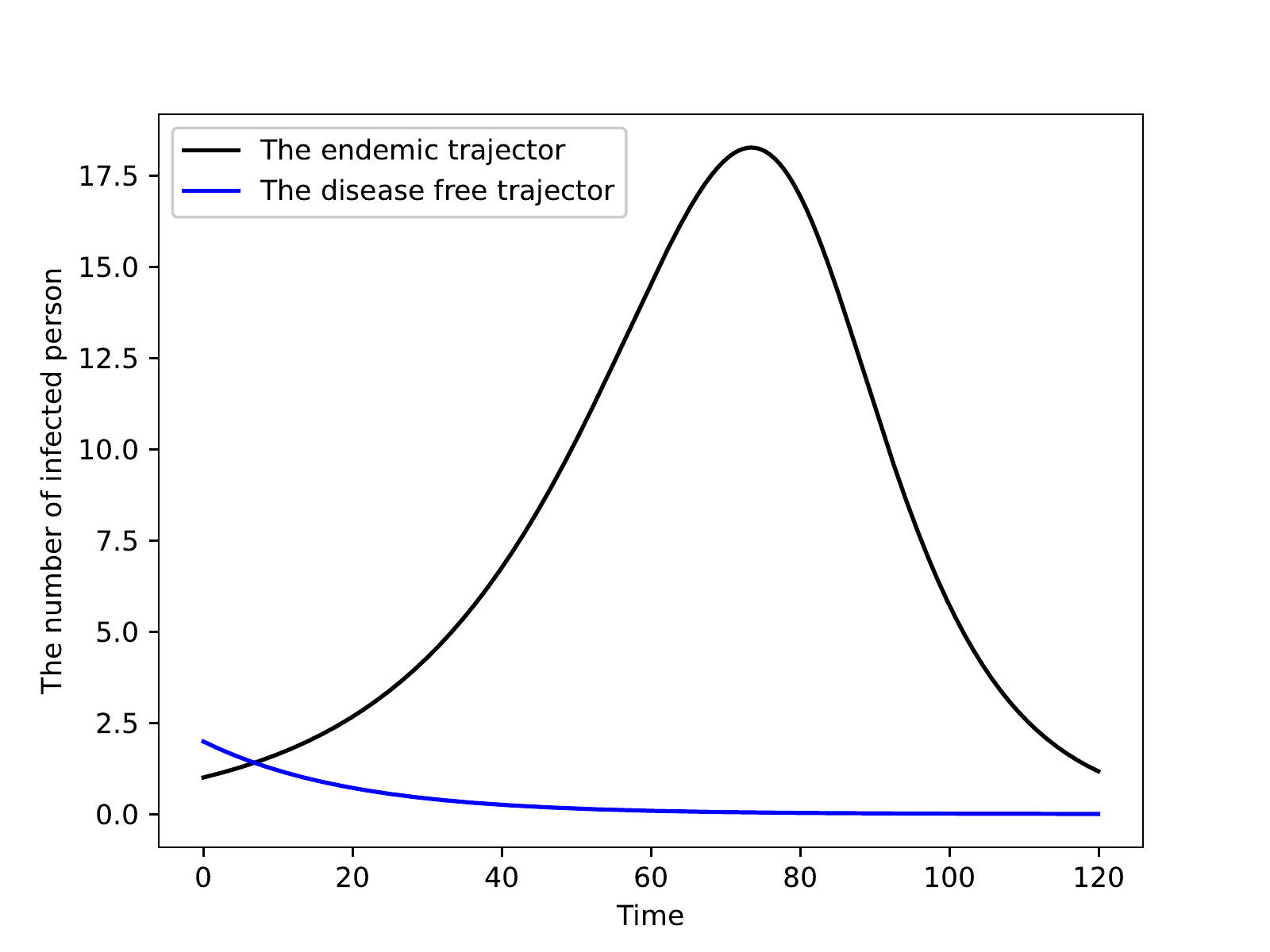}}
  \caption{Forward bifurcation and solutions under different bifurcation conditions.}
  \label{fig:R0}
\end{figure}

Based on the standard forward bifurcation theory of the nonlinear epidemic model \cite{martcheva2019methods}, there exists a threshold value, $R_0>0$, the basic reproductive number, which determines whether a disease outbreak occurs. This value applies to most epidemic model s. Therefore, we assume that the system of Equation \eqref{eq:1} satisfies the following forward bifurcation condition:

In the (C6) condition, there exists a function, $R_0^f$, that maps the parameters and function $f$ of the system of Equation \eqref{eq:1} to a positive real number, $R_0$. This map is continuous, and when $R_0<1$, the disease-free equilibrium, $x_{DFE}$, of the ordinary differential equation system, Equation \eqref{eq:1}, is globally stable; when $R_0>1$, the disease-free equilibrium is broken.

Condition (C6) represents the standard result of most epidemic systems. Hence, research on nonlinear epidemic models seek to find the basic reproductive number \cite{zhang2007periodic,wang2008threshold,wang2012basic}. Figure \ref{fig:R0} illustrates a simple forward bifurcation epidemic system. Figure \ref{fig:R0}(a) shows that when $R_0$ is smaller than $1$, the disease-free equilibrium is stable, and when this number is larger than $1$, the disease-free equilibrium becomes unstable. Meanwhile, Figure \ref{fig:R0}(b) shows the solution to $x_I$ under different bifurcation conditions. When $R_0>1$, the solution appears as the black line in Figure \ref{fig:R0}(b), and when $R_0<1$, the solution appears as the blue line in Figure \ref{fig:R0}(b). 
The above six conditions are the basic properties satisfied by most epidemic models. 

\section{Hybrid model trainability and estimability}

To finish the proofs, we first introduce some more notations. Let the neural-network model, $f_{\theta}(x) = \phi_n\circ\cdots\circ\phi_2\circ\phi_1(x)$, where $\phi_i(x)=\sigma(w_ix+b_i)$ is the linear combination of the matrix, $w_i$, and the bias vector, $b_i$, with the activation function, $\sigma$. The constant, $n>0$, is the number of layers in the neural network, $f_{\theta}$, and $\theta$ is the parameter set, including the weights in $w_i$ and $b_i$. All activation functions used in this study consist of elemental functions. We also note the norm, $\|(x_1(t),x_2(t),\cdots,x_m(t))\|=\sum_{i=1}^m\int_0^Tx_i(t)^2dt$, where $(0,T)$ is the collected area of the data. Then, the following Lemma can be directly obtained:

\begin{lemma}
The neural-network function, $f_{\theta}:R^n\to R^m$, is continuous in $x\in(0,T)^n$ with the bounded weight parameter, $\theta=(w,b)$, and elemental activation functions $\sigma$. 
\label{lemma:fc}
\end{lemma}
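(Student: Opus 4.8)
The plan is to reduce the continuity of $f_\theta$ to the elementary fact that a finite composition of continuous maps is continuous. Since $f_\theta = \phi_n\circ\cdots\circ\phi_1$ is built from finitely many layer maps $\phi_i(x)=\sigma(w_ix+b_i)$, it suffices to show that each single layer $\phi_i$ is continuous and then invoke the composition rule. I would establish continuity of one layer by decomposing it further, writing $\phi_i = \sigma\circ A_i$, where $A_i(x)=w_ix+b_i$ is the affine part and $\sigma$ is the (coordinatewise) activation.

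First I would argue that each affine map $A_i$ is continuous. Because the weight parameter $\theta=(w,b)$ is bounded, the entries of $w_i$ and $b_i$ are finite, so $A_i$ is a genuine affine transformation between finite-dimensional Euclidean spaces; every such map is Lipschitz continuous, with the operator norm of $w_i$ serving as a Lipschitz constant. Next I would argue that $\sigma$ is continuous. By hypothesis the activation functions are assembled from elemental functions, and every elementary function (polynomial, exponential, trigonometric, and their compositions) is continuous on its domain; hence $\sigma$, together with its coordinatewise extension, is continuous. Composing the continuous affine map with the continuous activation then yields continuity of $\phi_i$.

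Finally I would assemble the result by induction on the number of layers $n$. The base case $n=1$ is exactly the single-layer statement just proved. For the inductive step, writing $f_\theta=\phi_n\circ(\phi_{n-1}\circ\cdots\circ\phi_1)$, the inner composition is continuous by the inductive hypothesis while $\phi_n$ is continuous by the single-layer argument, so their composition is continuous. Restricting the domain to the cube $(0,T)^n$ preserves continuity, which gives the claim.

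I do not anticipate any genuine obstacle, as the statement is a routine consequence of standard real-analysis facts. The only point warranting explicit care is justifying continuity of $\sigma$ from the phrase \emph{elemental activation functions}: I would read ``elemental'' as meaning the activation is built from elementary functions, which are continuous on their domains, and I would note that the boundedness of $\theta$ is precisely what guarantees the affine maps have finite coefficients, so that the composition is well-defined and continuous throughout $(0,T)^n$.
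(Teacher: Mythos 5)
Your proof is correct and follows essentially the same route as the paper, which simply remarks that the lemma follows directly from the continuity of the elemental functions; you have spelled out the standard composition-of-continuous-maps argument that the paper leaves implicit. No further comment is needed.
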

This can be proven directly by using the continuities of the elemental functions.

When estimating function $f$, the most common data type is the number of infected individuals. Thus, we use the distance between the terms of infected individuals of Equation \eqref{eq:1} and the data of the infected individuals as the loss function.  The trivial formula of the loss can be written as $L_{\theta}=\|x_I(x_0,f_{\theta},t) - D(t)\|$, where $D(t)$ are the data. However, this formula is not suitable for Equation \eqref{eq:1}. Based on the following theorem, we prove that $f_{\theta}$ cannot be trained from data $D(t)$.

\begin{theorem}[Vanishing gradient]
The system of Equation \eqref{eq:1} satisfies conditions (C1)--(C6) if there exists a parameter set, $A\neq \emptyset$, which is a subset of $R^n$, and $\forall\theta\in A$, the 
basic reproductive number satisfies $R^f_0(f_{\theta})<1$. Then, $\forall\epsilon>0$ and $D(t)$, there exists a positive constant, $\delta$, positive initial condition $x_0$, and parameters $\theta$, such that the gradient of the loss function, $L_{\theta}=\|x_I(x_0,f_{\theta},t) - D(t)\|$, with respect to the neural-network parameters, satisfies 
$\nabla_{\theta} L_{\theta}< \epsilon$.
\label{th:gradient}
\end{theorem}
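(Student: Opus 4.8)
The plan is to differentiate $L_\theta$ explicitly, reduce the size of the gradient to the size of the parameter-sensitivity of the infected compartment, and then show that this sensitivity is forced to be small whenever the trajectory stays near the disease-free equilibrium. Since the norm is the squared $L^2$ norm over $(0,T)$, differentiating under the integral sign gives
\[
\nabla_\theta L_\theta = 2\sum_i \int_0^T \bigl(x_{I,i}(x_0,f_\theta,t)-D_i(t)\bigr)\,\nabla_\theta x_{I,i}(x_0,f_\theta,t)\,dt ,
\]
so, because $x_I$ is bounded on the compact interval and $D$ is fixed data, it suffices to make the sensitivity $P_I(t):=\nabla_\theta x_I(x_0,f_\theta,t)$ uniformly small on $(0,T)$. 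First I would fix $\theta\in A$, so that $R_0^f(f_\theta)<1$; by (C6) the disease-free equilibrium $x_{DFE}=(S^*,0)$ is then globally stable. I choose the positive initial condition $x_0=(S^*,\delta v)$ with $v$ a positive vector and $\delta>0$ small. Since $x_I\equiv 0$ is the exact solution issuing from $(S^*,0)$ by (C5), the Lipschitz bound (C1) and Gronwall's inequality yield continuous dependence on the initial datum, whence $\sup_{t\in(0,T)}\|x_I(x_0,f_\theta,t)\|=O(\delta)$; global stability guarantees this smallness is not lost over the observation window.

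The heart of the argument is the variational (sensitivity) equation. Writing $P=\nabla_\theta x$, one has $P'=\partial_x g\,P+\partial_f g\,\nabla_\theta f_\theta$ with $P(0)=0$, since the initial datum does not depend on $\theta$. For the infected block this reads
\[
P_I'=\partial_{x_I}g_I\,P_I+\bigl[\partial_{x_S}g_I\,P_S+\partial_f g_I\,\nabla_\theta f_\theta\bigr].
\]
Condition (C5) states that $g_I$ vanishes identically on the slice $\{x_I=0\}$; differentiating that identity along the slice shows that the two tangential derivatives $\partial_{x_S}g_I$ and $\partial_f g_I$ both vanish at $x_I=0$, so by continuity each is $O(\|x_I\|)=O(\delta)$ along our trajectory. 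With $P_S$ bounded (Gronwall applied to the full system, whose forcing is bounded by (C2)) and $\nabla_\theta f_\theta$ bounded (Lemma \ref{lemma:fc} together with smooth activations and bounded weights), the bracketed forcing is $O(\delta)$, while the homogeneous coefficient $\partial_{x_I}g_I$ is bounded by $L$. Applying Gronwall to this linear equation with zero initial data yields $\sup_{(0,T)}\|P_I\|=O(\delta)$. Substituting into the gradient formula gives $\|\nabla_\theta L_\theta\|\le C\delta$ for a constant $C$ depending only on $T$, $L$, $D$ and the weight bound; choosing $\delta$ with $C\delta<\epsilon$ completes the proof.

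The step I expect to be the main obstacle is precisely this sensitivity estimate. One must first upgrade the stated Lipschitz hypotheses (C1)--(C2) to genuine $C^1$ dependence, so that $\nabla_\theta x_I$ exists and actually solves the variational equation, and then exploit the structural fact that (C5) annihilates every derivative of $g_I$ tangential to the disease-free slice $\{x_I=0\}$, leaving only the homogeneous coefficient $\partial_{x_I}g_I$. It is this cancellation---not mere boundedness of the derivatives---that pushes the forcing of $P_I$ down to $O(\delta)$ and is the true mechanism behind the vanishing gradient: near the disease-free state the infected dynamics are insensitive to $f$, so the loss carries essentially no information about $\theta$. By comparison, the Gronwall propagation and the final integral bound are routine.
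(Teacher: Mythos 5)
Your proof is sound in outline but takes a genuinely different route from the paper's. The paper never touches the variational equation: it fixes $\theta,\hat\theta\in A$, invokes the stability of $x_{DFE}$ from (C6) to make $\|x_I(x_0,f_\theta,t)\|$ and $\|x_I(x_0,f_{\hat\theta},t)\|$ each smaller than $\epsilon/2$ for initial data within $\delta'=\min(\delta,\hat\delta)$ of the disease-free equilibrium, bounds $|L_\theta-L_{\hat\theta}|<\epsilon$ by the triangle inequality, and then concludes that near-constancy of the loss on a small parameter ball $B(\theta,a)\subset A$ forces $\nabla_\theta L_\theta<\epsilon$. You instead differentiate the loss directly, derive the sensitivity equation for $P_I=\nabla_\theta x_I$, and use the structural fact that (C5) annihilates $\partial_{x_S}g_I$ and $\partial_f g_I$ on the slice $\{x_I=0\}$, so that Gronwall with zero initial data gives $\sup_{(0,T)}\|P_I\|=O(\delta)$. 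Each approach buys something: the paper's argument needs only the Lipschitz hypotheses (C1)--(C2) as stated and leans on the global stability in (C6), but its last step (passing from $|L_\theta-L_{\hat\theta}|<\epsilon$ on a ball of fixed radius to a bound on the gradient) is a difference bound, not a difference-quotient bound, so it does not by itself control $\nabla_\theta L_\theta$; your argument genuinely bounds the derivative and isolates the real mechanism (the infected dynamics are insensitive to $f$ near $x_I=0$), and it only needs finite-time Gronwall rather than stability for all $t\ge 0$, but it requires upgrading (C1)--(C2) to $C^1$ dependence so that the variational equation is valid --- a hypothesis the theorem implicitly needs anyway for $\nabla_\theta L_\theta$ to exist, and which you correctly flag. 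One small caveat: plain continuity of the derivatives gives only $o(1)$ rather than $O(\|x_I\|)$ for the tangential derivatives along the trajectory; that weaker bound still suffices for your conclusion, but the $O(\|x_I\|)$ rate as written would need Lipschitz derivatives.
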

\begin{proof}
When $\theta\in A$, by (C6), the disease-free equilibrium, $x_{DFE}=(S^*,0)$, is stable. Thus it follows that $\forall\epsilon>0$, there exists a constant, $\delta>0$, for all initial conditions, $\|x_0-x_{DFE}\|<\delta$, such that, for every $t\ge 0$, $\|x_I(x_0,f_{\theta},t)-0\|=\|x_I(x_0,f_{\theta},t) \|<\epsilon/2$. Meanwhile, for another parameter $\hat{\theta}\neq\theta$ and $\hat{\theta}\in A$, we could find a $\hat{\delta}>0$, such that for all
 initial conditions $\|x_0-x_{DFE}\|<\hat{\delta}$, we have $\|x_I(x_0,f_{\hat{\theta}},t)\|<\epsilon/2$. Then, we let $\delta'=\min(\delta,\hat{\delta})$, and $\|x_0-x_{DFE}\|<\delta'$. Thus, we obtain the following equations:
 
$$
\begin{aligned}
   L_{\theta} - L_{\hat{\theta}}  & \le \|x_I(x_0,f_{\theta},t) -  D(t)\| - \|x_I(x_0,f_{\hat{\theta}},t) - D(t)\| \\,
   & \le (\| D(t) \| + \|x_I(x_0,f_{\theta},t) \|) - (\| D(t) \| -\|x_I(x_0,f_{\hat{\theta}},t) \|) \\,
   & \le (\| D(t) \| + \epsilon/2) - (\| D(t) \| -\epsilon/2) \\
   & = \epsilon.
\end{aligned}
$$

By the continuity of $f_{\theta}$ (Lemma \ref{lemma:fc}) and $R_0(x)$, there exists a positive constant, $a$, such that the small ball, $B(\theta,a)\in A$. Therefore, by the above equations, $\forall \hat{\theta}\in B(\theta,a)$, we have $ |L_{\theta} - L_{\hat{\theta}}| < \epsilon$, which follows $\nabla_{\theta} L_{\theta}< \epsilon$. 
\end{proof}

The blue line of Figure \ref{fig:R0}(b) shows one special solution of the epidemic model when $R_0<1$, in which the solution approaches zero very fast. The same is true for the other solution  of the same model, making the difference between them negligible. This causes the gradient of the loss function to become rather small, which is intuitive.

In the real world, almost all detected diseases have basic reproductive numbers that are larger than one. However, in the training process, the trained function, $f_{\theta}$, may appear in an area that makes the number smaller than one. Therefore, based on the above theorem, the hybrid model is untrainable.
For this reason, we introduce a loss function that combines the standard loss with the basic reproductive number, $R_0$, which is the bifurcation parameter of the nonlinear epidemic system, Equation \eqref{eq:1}. This makes the loss function non-zero when $R_0$ is smaller than $1$.
\begin{equation}
     L_{\theta} = \|x_I(x_0,f_{\theta},t) - D(t)\| + \alpha \max(1 - R^f_0(f_{\theta}), 0), 
     \label{eq:m}
\end{equation}
where $\alpha>0$ is the hyper-parameter determined by prior knowledge. In this loss function, when $R_0<1$, according to Theorem \ref{th:gradient}, loss $\|x_I(x_0,f_{\theta},t) - D(t)\|$ may be very small. However, when the value of $\max(1 - R^f_0(f_{\theta}), 0)$ is larger than zero, and its minimal value is zero, the gradient of that part, hence the entire loss function, becomes non-zero. When $R_0>1$, the loss function becomes standard again. For other hybrid nonlinear models with different bifurcation parameters, we can construct suitable bifurcation items in the loss function to ensure trainability. 

From the above analysis, we have obtained a suitable loss function that allows the hybrid nonlinear neural-network model to be trained. However, the question remains of whether the infection function can be learned from real data. Therefore, we introduce the following loss function:
\begin{equation}
    L_{\theta} = \|x_I(x_0,f_{\theta},t) - x_I(x_0,f^*,t)\| + \alpha \max(1 - R^f_0(f_{\theta}), 0). 
    \label{eq:min}
\end{equation}
We must find whether the real-world infection function, $f^*$, can be fitted by neural-network $f_{\theta}$. Using the following theorem, we prove that the system of Equation \eqref{eq:1} fits function $f^*$ using the loss function of Equation \eqref{eq:min}.

\begin{theorem}[Existence] 
The system of Equation \eqref{eq:1} satisfies Conditions (C1)--(C6) if $R^f_0(f^*)>1$. Thus, $\forall \epsilon>0$, $f_{\theta}$ satisfies $\|f_{\theta} - f^*\|<\epsilon$ if and only if there exists a parameter set, $\theta$, such that the loss function, $L_{\theta}$, in Equation \eqref{eq:min} satisfies $L_{\theta}<\epsilon$.
\label{th:existence}
\end{theorem}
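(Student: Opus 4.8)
The plan is to prove the two implications separately, reading the biconditional in the spirit that the loss $L_{\theta}$ in \eqref{eq:min} is small precisely when $f_{\theta}$ is close to the true infection function $f^*$; since the loss is a sum of two nonnegative terms, I will control each term in turn.

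For the forward implication (closeness of $f_{\theta}$ forces small loss), I would bound the two summands of \eqref{eq:min} independently. For the trajectory term I appeal to continuous dependence of the solution of \eqref{eq:1} on its right-hand side: writing the integral form of the two solutions with common initial datum $x_0$ and splitting $g(x,f_{\theta})-g(y,f^*)$ into a piece controlled by (C2) (Lipschitz in the $f$-argument) and a piece controlled by (C1) (Lipschitz in the state), a Gr\"onwall estimate controls $\|x_I(x_0,f_{\theta},\cdot)-x_I(x_0,f^*,\cdot)\|$ by a constant multiple of $\|f_{\theta}-f^*\|$ in the norm fixed in the text, the constant depending only on $L=\max(L_1,L_2)$ and $T$. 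For the bifurcation term I use the continuity of $R^f_0$ from (C6) together with $R^f_0(f^*)>1$: for $f_{\theta}$ sufficiently close to $f^*$ we have $R^f_0(f_{\theta})>1$, so $\max(1-R^f_0(f_{\theta}),0)=0$ and the penalty vanishes. Combining the two yields the forward estimate.

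The reverse implication (small loss forces closeness of $f_{\theta}$) carries the real content, and this is where (C3) and the hypothesis $R^f_0(f^*)>1$ enter. A small loss makes the trajectory term $\|x_I(x_0,f_{\theta},\cdot)-x_I(x_0,f^*,\cdot)\|$ small. Because $R^f_0(f^*)>1$, condition (C6) guarantees that the disease-free equilibrium is broken, so the true trajectory $x_I(x_0,f^*,t)$ does not converge to $0$ and is bounded away from $0$ on a time set of positive measure; by the trajectory estimate the same then holds for $x_I(x_0,f_{\theta},t)$. On that set the hypothesis $x_I\neq 0$ of (C3) is met, so $g_I$ is injective in its $f$-argument there. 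Feeding the near-coincidence of the trajectories back into $x_I'=g_I(x,f)$ shows that $g_I(x,f_{\theta})$ and $g_I(x,f^*)$ must nearly agree along the orbit, and injectivity transfers this to near-agreement of $f_{\theta}$ and $f^*$, giving $\|f_{\theta}-f^*\|<\epsilon$.

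The main obstacle is exactly this final transfer. Condition (C3) is only a \emph{qualitative} injectivity statement ($\|g_I(x,y_1)-g_I(x,y_2)\|>0$ for $y_1\neq y_2$), so on its own it converts equality of derivatives into equality of functions but supplies no quantitative modulus to pass from ``trajectories $\epsilon$-close'' to ``functions $\epsilon$-close.'' I expect to close this gap by a compactness and contradiction argument: Lemma \ref{lemma:fc} gives continuity of $f_{\theta}$ under bounded weights, so along a hypothetical sequence with $L_{\theta}\to 0$ but $\|f_{\theta}-f^*\|\ge\epsilon$ one extracts a convergent subsequence whose limit $\bar f$ produces a trajectory identical to that of $f^*$ yet with $\bar f\neq f^*$, contradicting the injectivity furnished by (C3) on the persistence set. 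A secondary technical point is that closeness of the trajectories in the $L^2$-type norm does not directly give closeness of their derivatives, so I would argue through the integrated form of \eqref{eq:1} rather than differentiating pointwise.
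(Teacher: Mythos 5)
Your outline matches the paper's strategy in both directions --- forward via Lipschitz estimates on the integral form, reverse via contradiction through the injectivity condition (C3) at the first instant where the two infection functions disagree --- but the execution differs in two places worth noting. In the forward direction the paper runs a hand-rolled discrete Gr\"onwall argument (time steps of length $\Delta t = 1/(2L)$, giving $\|x^{\theta}_n - x^{*}_n\| \le (2n-1)\epsilon$ and hence a bound $(2TL+1)^2\epsilon$ on the trajectory term), which is the same estimate you get from continuous dependence; however, the paper silently drops the penalty term $\alpha\max(1-R^f_0(f_{\theta}),0)$ from the loss at this point, whereas you explicitly kill it using continuity of $R^f_0$ and the hypothesis $R^f_0(f^*)>1$ --- that is a step the paper needs and does not supply, so keep it. In the reverse direction the paper does not use compactness: it locates the first disagreement time $t_c$, invokes (C3) plus continuity of $g$ to extract a \emph{quantitative} lower bound $\delta'>0$ on $\|g_I(x(t),f_{\theta})-g_I(x(t),f_{\theta'})\|$ over a short interval $(t_c,t_c+\delta t)$, and integrates twice to get $\|x_I(\cdot,f_{\theta},\cdot)-x_I(\cdot,f_{\theta'},\cdot)\| > \delta'^2\delta t^3/3 =: \hat{\delta}$, contradicting the smallness of the loss. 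Your compactness-and-limit substitute is a legitimate alternative (and you are right that (C3) alone provides no modulus, which is exactly the soft spot in the paper's claim that $\hat{\delta}$ can be taken larger than $\epsilon'$), but it carries its own unaddressed hypothesis: extracting a convergent subsequence of the $f_{\theta_k}$ requires the weight parameters to range over a compact set, which Lemma \ref{lemma:fc} assumes rather than establishes; and the limiting contradiction only directly yields the qualitative statement ``$L_{\theta}\to 0$ forces $\|f_{\theta}-f^*\|\to 0$'' rather than the fixed-$\epsilon$ equivalence asserted in the theorem. Neither route is airtight as written, but yours and the paper's founder on different rocks, so if you pursue the compactness version you should state the bounded-parameter assumption explicitly.
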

\begin{proof}
The solution to the system of Equation \eqref{eq:1} with an initial time of zero and $x_0$ as the initial condition is written as 
$x(x_0,f_{\theta},\Delta t)=x_0+\int_0^{\Delta t}g(x,f_{\theta})dt$.
Therefore, we let $x^{\theta}_{1}=x_0 + \int_0^{\Delta t}g(x,f_{\theta})dt$, 
$x^*_{1}=x_0 + \int_0^{\Delta t}g(x,f^*)dt$, and $x^{\theta}_{n+1}=x^{\theta}_{n}+\int_{n\Delta t}^{(n+1)\Delta t}g(x,f_{\theta})dt$,
$x^{*}_{n+1}=x^{*}_{n}+\int_{n\Delta t}^{(n+1)\Delta t}g(x,f^*)dt$ for $n=1,2,..,N$,
where $N=\lfloor T/\Delta t\rfloor$. 

By the definition of $\|\cdot\|$, function $\|x(x_0,f_{\theta},t)-x(x_0,f^*,t) \|$ is monotone increasing with respect to $t$. Then, by Conditions (C1) and (C2), the following system of equations holds:

$$
\begin{aligned}
    \|x^{\theta}_{1}-x^{*}_{1} \| &\le \int_0^{\Delta t} \| g(x,f_{\theta})-g(x,f^*) \| dt \\
    & \le \Delta t L(\|f_{\theta} - f^*\| + \|x^{\theta}_{1}-x^{*}_{1} \|) \\
    & \le \Delta t L(\epsilon + \|x^{\theta}_{1}-x^{*}_{1} \|).
\end{aligned}
$$

Let $\Delta t = \frac{1}{2L}$ be the time interval, so that we have $\|x^{\theta}_{1}-x^{*}_{1} \|\le\frac{\Delta t L}{1-\Delta t L}\epsilon = \epsilon$.
Similarly, the following hold:
$\|x^{\theta}_{2}-x^{*}_{2} \|\le \frac{\Delta t L+1}{1-\Delta t L}\epsilon = 3\epsilon$,
and $\|x^{\theta}_{n}-x^{*}_{n} \|\le \frac{\Delta t L+n-1}{1-\Delta t L}\epsilon= (2n-1)\epsilon$.
Thus,

$$
\begin{aligned}
    L_{\theta} &= \|x_I(x_0,f_{\theta},t) – x_I(x_0,f^*,t)\|
               \le \sum_{n=1}^N\|x^{\theta}_{n}-x^{*}_{n} \| \\
               &\le N^2\epsilon \le (2TL+1)^2\epsilon.
\end{aligned}
$$
Let $\epsilon’ = (2TL+1)^{-2}\epsilon$, so that we get $L_{\theta} < \epsilon'$.

On the other hand, supposing there exists a parameter set, $\theta'$, and a positive constant, $\delta$, such that $\|f_{\theta'} - f^*\| > \delta$ and $ L_{\theta'} < \epsilon$, it follows that if $\|f_{\theta}-f^* \|<\epsilon$, then $L_{\theta}<\epsilon'$ by the first part of that proof, and
$\|f_{\theta'} - f_{\theta}\| > \delta-\epsilon>0$.
Thus, there exists a time, $t_c \in (0,T)$, $\forall t \in [0,t_c)$, $f_{\theta'}(t) = f_{\theta}(t)$ and $f_{\theta'}(t_c) \neq f_{\theta}(t_c)$.
Therefore, 

$$
\begin{aligned}
     \|x_I(x_0,f_{\theta},t) - x_I(x_0,f_{\theta'},t) \| & \ge \int_0^{t_c+\delta t} (x_I(x_0,f_{\theta},t) - x_I(x_0,f_{\theta'},t))^2 dt \\
     & = \int_{t_c}^{t_c+\delta t} (\int_{t_c}^{t}g_I(x,f_{\theta}) - g_I(x,f_{\theta'})ds)^2 dt.
\end{aligned}
$$
According to $\|g_I(x,f_{\theta})-g_I(x,f_{\theta'})\|>0$ (Condition (C3)) and the continuity of function $g(x,y)$, there exists a small-enough $\delta t$ that, $\forall t \in (t,t+\delta t)$, $\|g_I(x(t),f_{\theta})-g_I(x(t),f_{\theta'})\|>0$, from which it follows that constant $\delta'>0$ exists, such that $\|g_I(x(t),f_{\theta})-g_I(x(t),f_{\theta'})\|>\delta'$.
Hence, we have $\|x_I(x_0,f_{\theta},t) - x_I(x_0,f_{\theta'},t) \|>\delta'^2\delta t^3/3:=\hat{\delta}>0$.
Thus, $L_{\theta'} > \hat{\delta} - \epsilon'>0$, which leads to a contradiction.
\end{proof}

From the above theorem, if the epidemic hybrid model satisfies the conditions listed in the last section , the form of the infection function in the hybrid model can be estimated using real data. In the next two sections, we explain how to use a suitable numerical method to estimate the infection function.

\section{Numerical methods}
\begin{figure}[htbp]
\centering
  \includegraphics[width=0.9\textwidth]{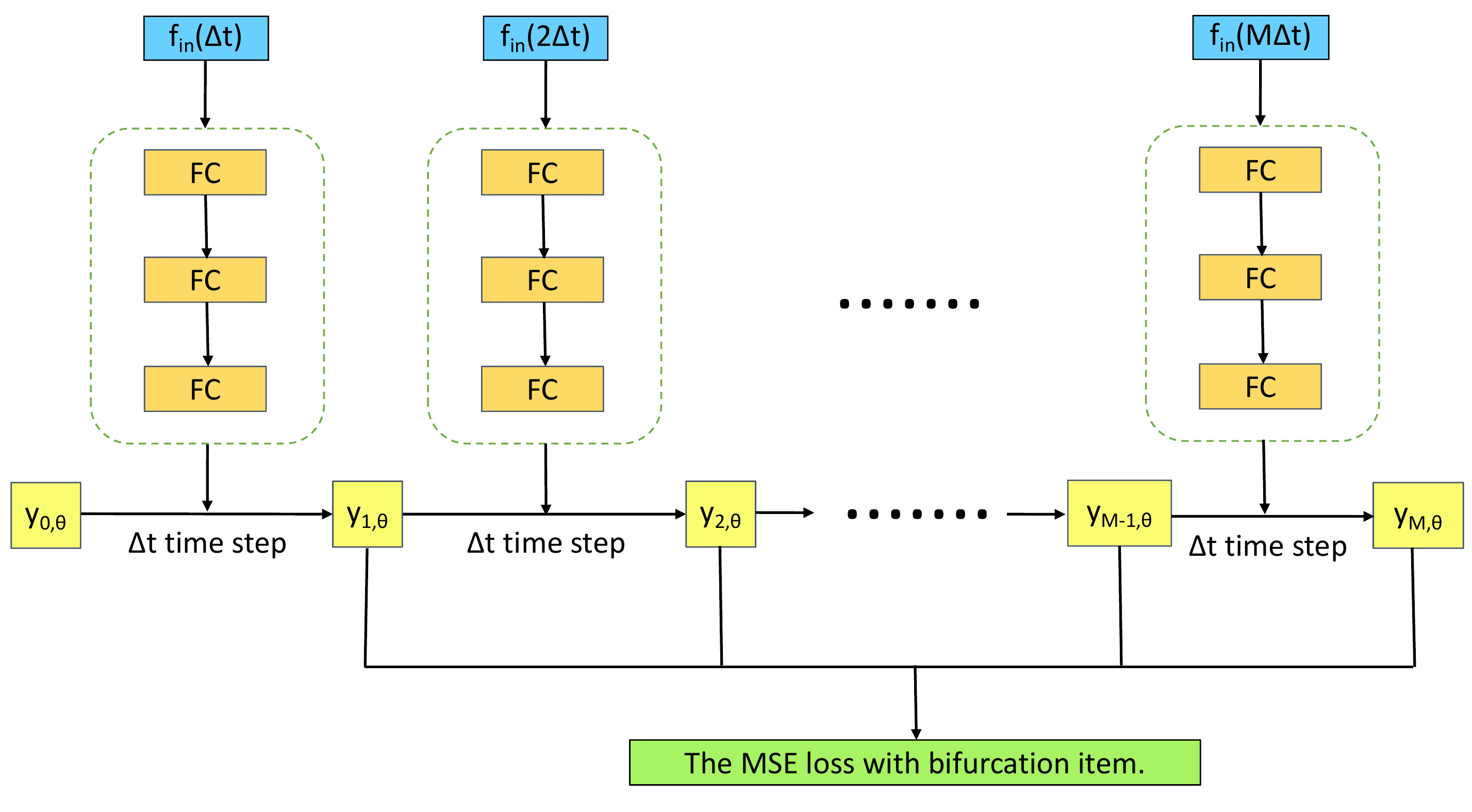}
  \caption{Calculation process.}
\label{fig:algorithm}
\end{figure}
Euler and Runge--Kutta methods are the most effective and most widely used for solving ordinary differential equations \cite{hildebrand1987introduction}. The Runge--Kutta method derives from the integral form of the ordinary differential equation and is used to calculate the numerical solution at each time point. The Euler method derives from the discrete form of differentiation, and it is less accurate. However, it requires fewer computing resources. 

In this work, given a known infection function with small perturbations, the Euler method is used to generate training data. During training, the fourth-order Runge--Kutta method with fixed steps is used to calculate the loss between generated data and the hybrid model. Figure \ref{fig:algorithm} shows the calculation diagram of the numerical method. At each time step of the numerical calculation, the neural-network input is $f_{in}(i\Delta t)$, $i=1,2,\cdots,M$, where the $f_{in}(i\Delta t)=i\Delta t$, or $f_{in}(i\Delta t)=x(i\Delta t)$, and the output is the value of the estimated function at that time. Then, the numerical solution of the hybrid model at each step is used to fit the generated data. In the loss function of Equation \eqref{eq:m}, the norm of $\|\cdot\|$ is difficult to calculate directly. Thus , we use the discrete form of $\|\cdot\|$ (mean-squared error (MSE) loss) to calculate its norm \cite{wackerly2014mathematical}.  By combining the MSE loss and the bifurcation items, we obtain the loss function used for numerical calculation:
\begin{equation}
    L_{\theta} = \sum_{i=1}^{M}(y_{i,\theta}-D_i)^2/M + \alpha\max(1 - R^f_0(f_{\theta}), 0),
    \label{eq:descrete}
\end{equation}
where $M=\lfloor T/dt \rfloor$, $dt$ is the length of the time step, and $T$ is the length of the sample collected area. $y_{i,\theta}=x_I(x_0,f_{\theta},idt), and i=1,2,\cdots,M$ is the numerical solution of the infected term of the system of Equation \eqref{eq:1} at time $idt$. $D_i$ is the collected data at time $idt$. The initial condition applies $x_0$ number of susceptible and infected individuals. The number of infected individuals reflects the initial number of cases, and the number of susceptible persons reflects the entire population minus infected cases. 
\section{Numerical experiments}
In this section, we report the results of four numerical experiments to demonstrate the efficacy of the proposed hybrid neural-network model when applied to estimating periodic and Holling type infection functions. All experiments were performed on an Intel Core i7 -10700 @2.90-GHz computer with 16-GB RAM. All codes were written for the Windows x64 operating system using Pytorch ($>=1.4.0$) and Numpy ($>=1.18.5$) Python packages. Source code can be found at \url{https://github.com/ChentongLi/Inf_Estimation_pytorch}. 

\begin{figure}[htbp]
\centering
\subcaptionbox{Hybrid model fitted result.}{\includegraphics[width=0.48\textwidth]{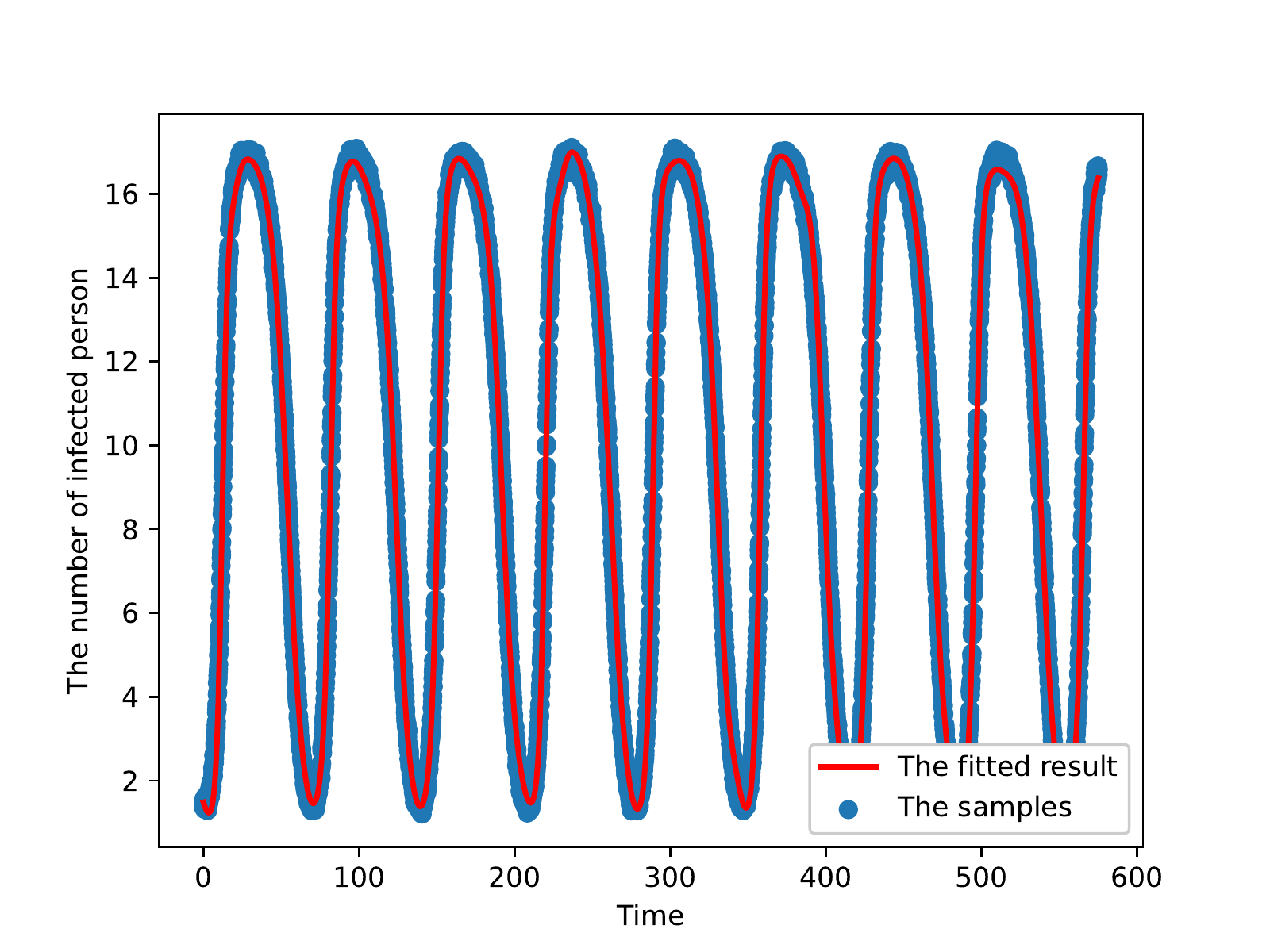}}%
\hfill
\subcaptionbox{Neural network fitted result.}{\includegraphics[width=0.48\textwidth]{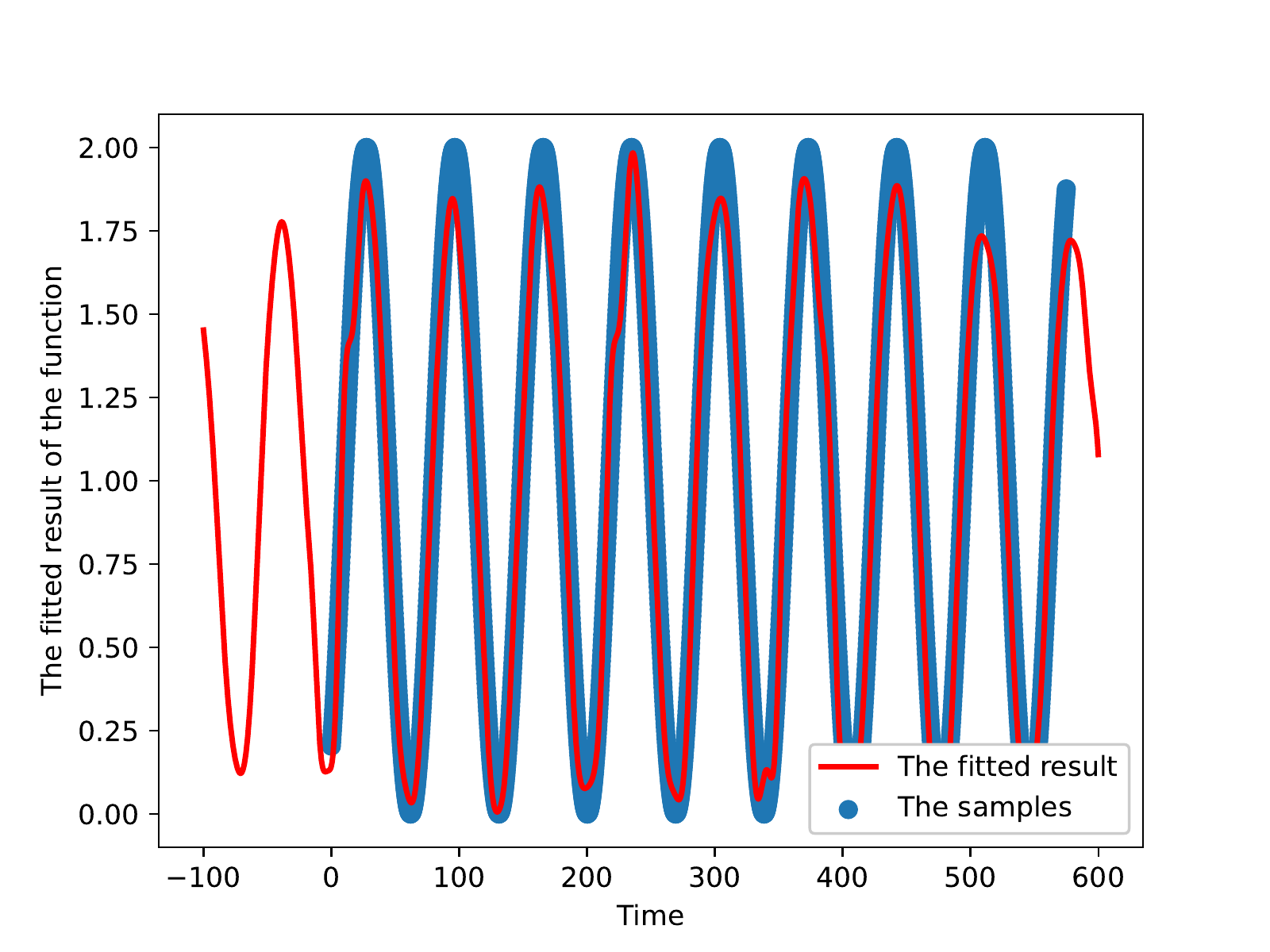}}%

\subcaptionbox{$|\beta(t)-\beta_{\theta}(t)|$.}{\includegraphics[width=0.48\textwidth]{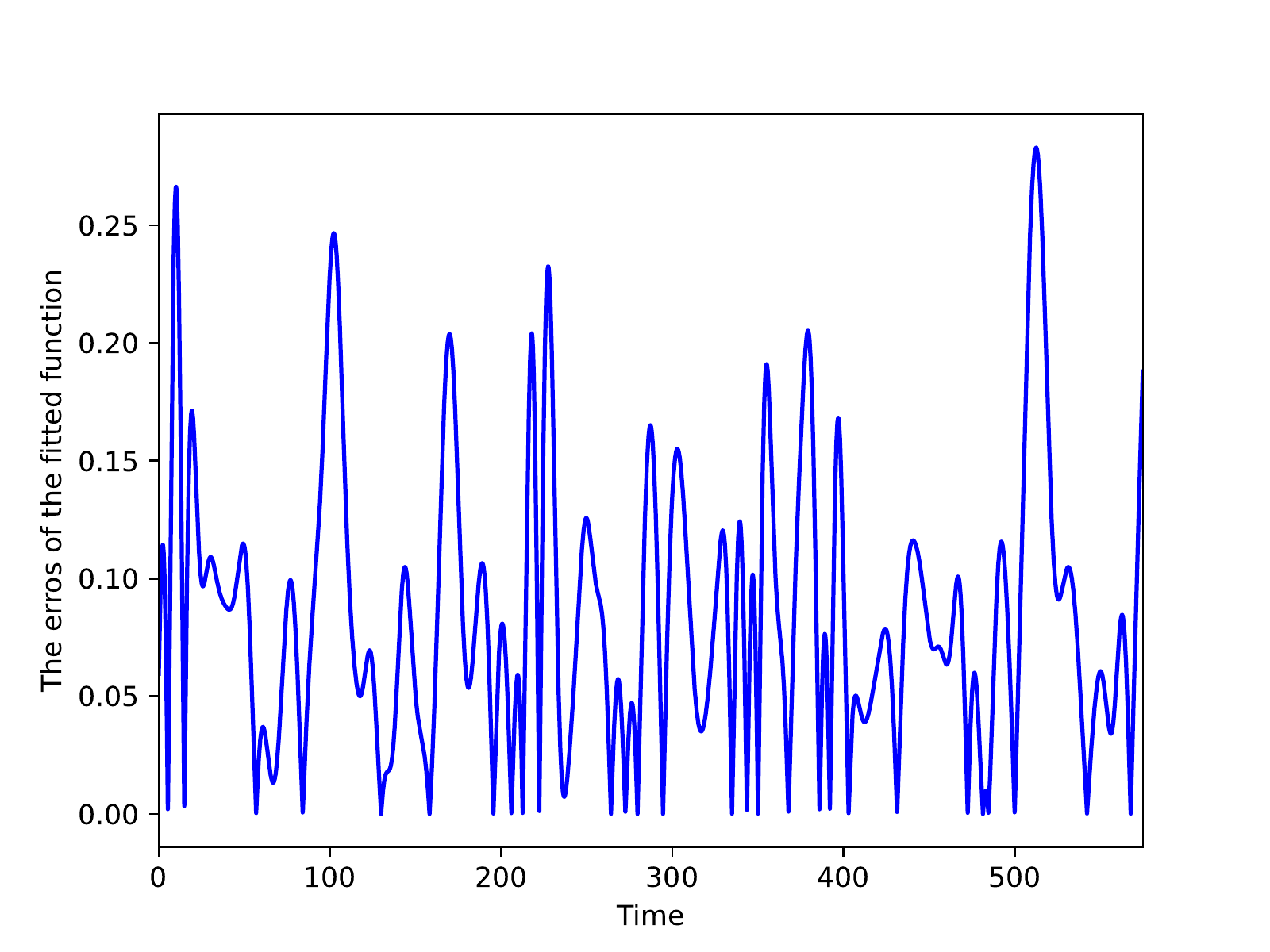}}%
\hfill
\subcaptionbox{Loss with respect to iterations.}{\includegraphics[width=0.48\textwidth]{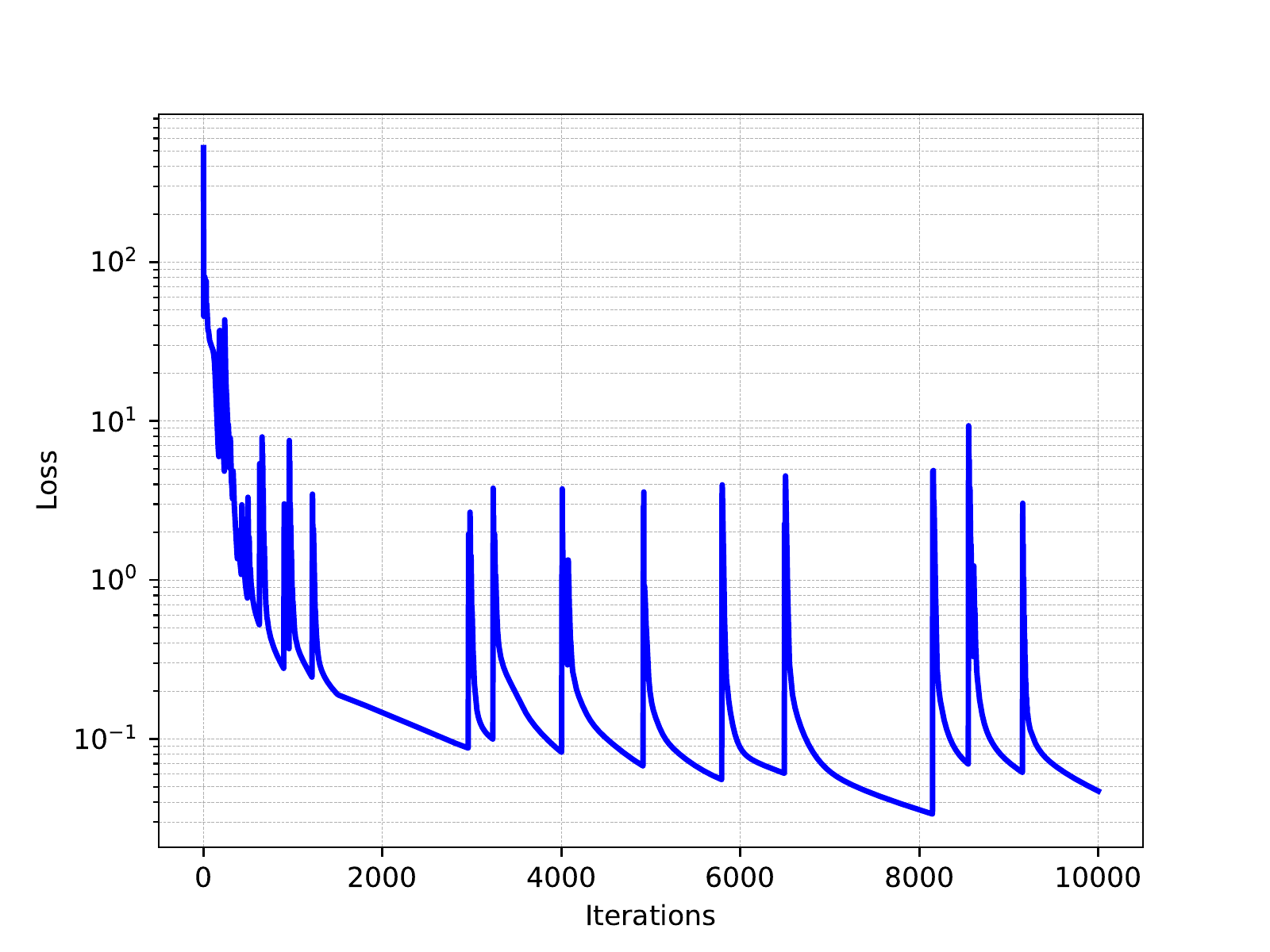}}%
\caption{Fitted results and susceptible--infected--susceptible training process with a simply periodic function: (a) Fitted result of the hybrid model of Equation \eqref{eq:SIS} using generated data; (b) Fitted result of the neural network, $\beta_{\theta}$, of the model of Equation \eqref{eq:SIS} using the real function, $\beta_1$, in the generation model; (c) Absolute value of errors between $\beta_{\theta}$ and $\beta$; and (d) Change of loss function value over iterations.}
\label{fig:1}
\end{figure}

\begin{figure}[htbp]
\centering
\subcaptionbox{Hybrid model fitted result.}{\includegraphics[width=0.48\textwidth]{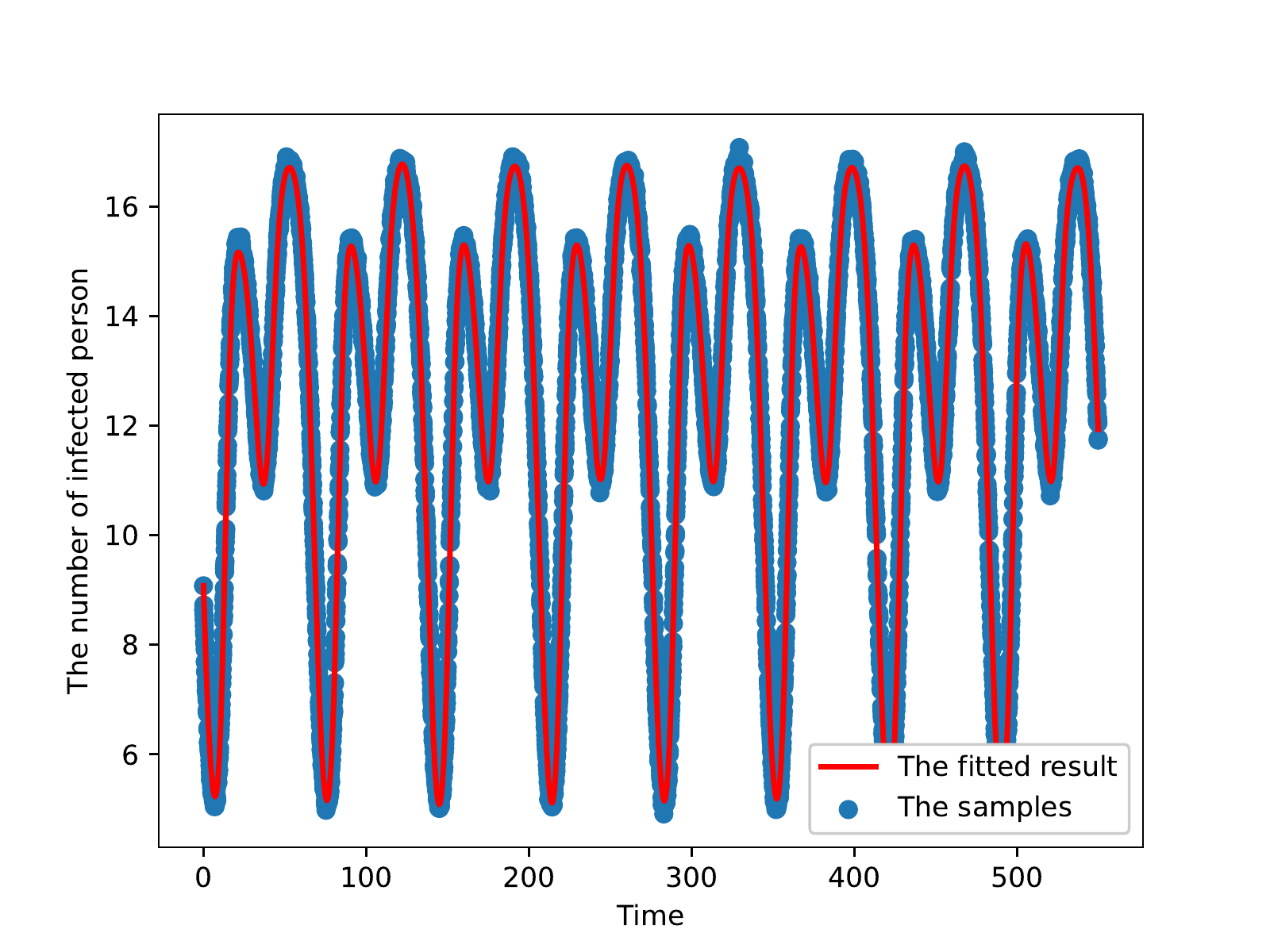}}%
\hfill
\subcaptionbox{Neural network fitted result.}{\includegraphics[width=0.48\textwidth]{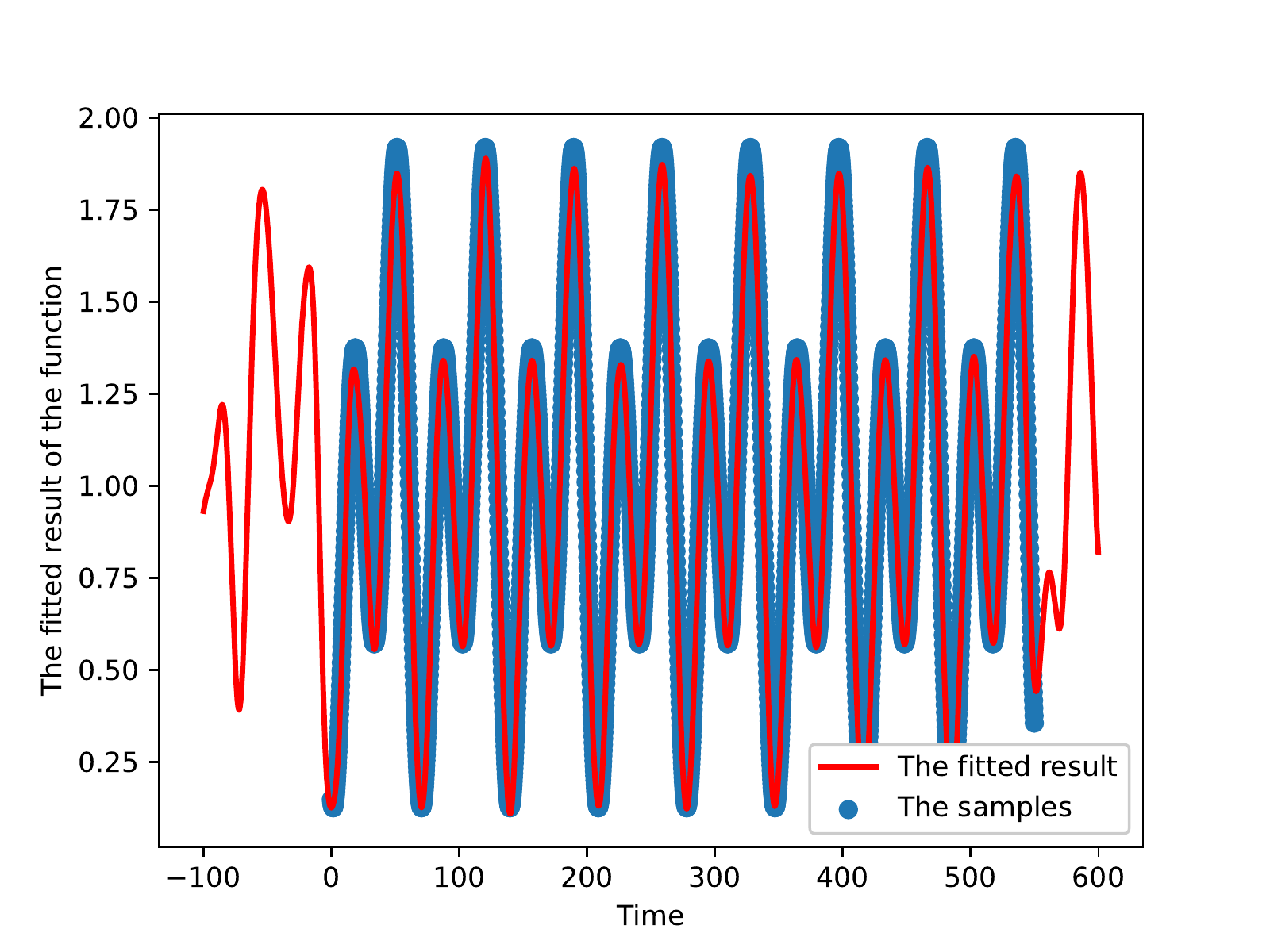}}%

\subcaptionbox{$|\beta(t)-\beta_{\theta}(t)|$.}{\includegraphics[width=0.48\textwidth]{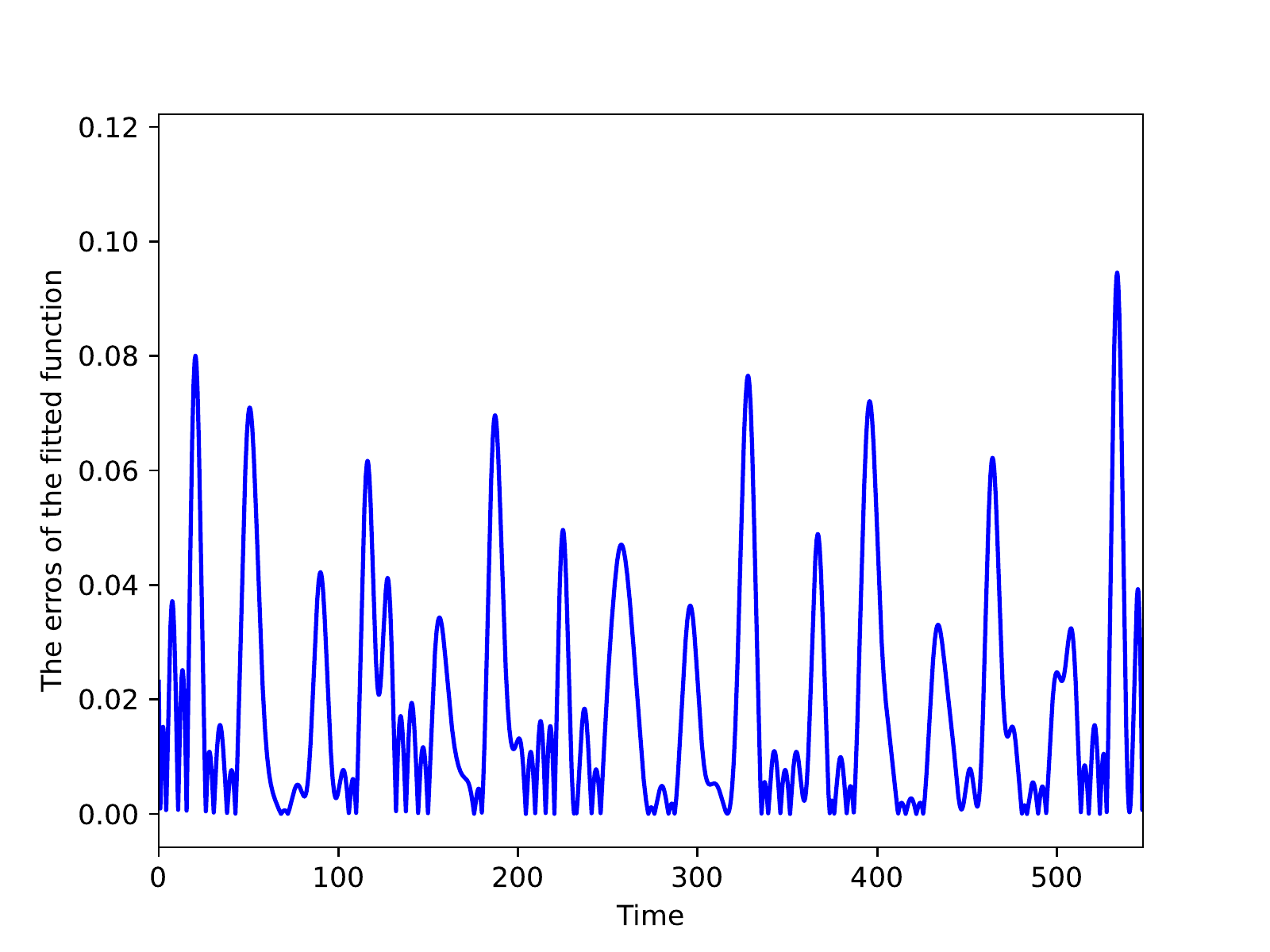}}%
\hfill
\subcaptionbox{Loss with respect to iterations.}{\includegraphics[width=0.48\textwidth]{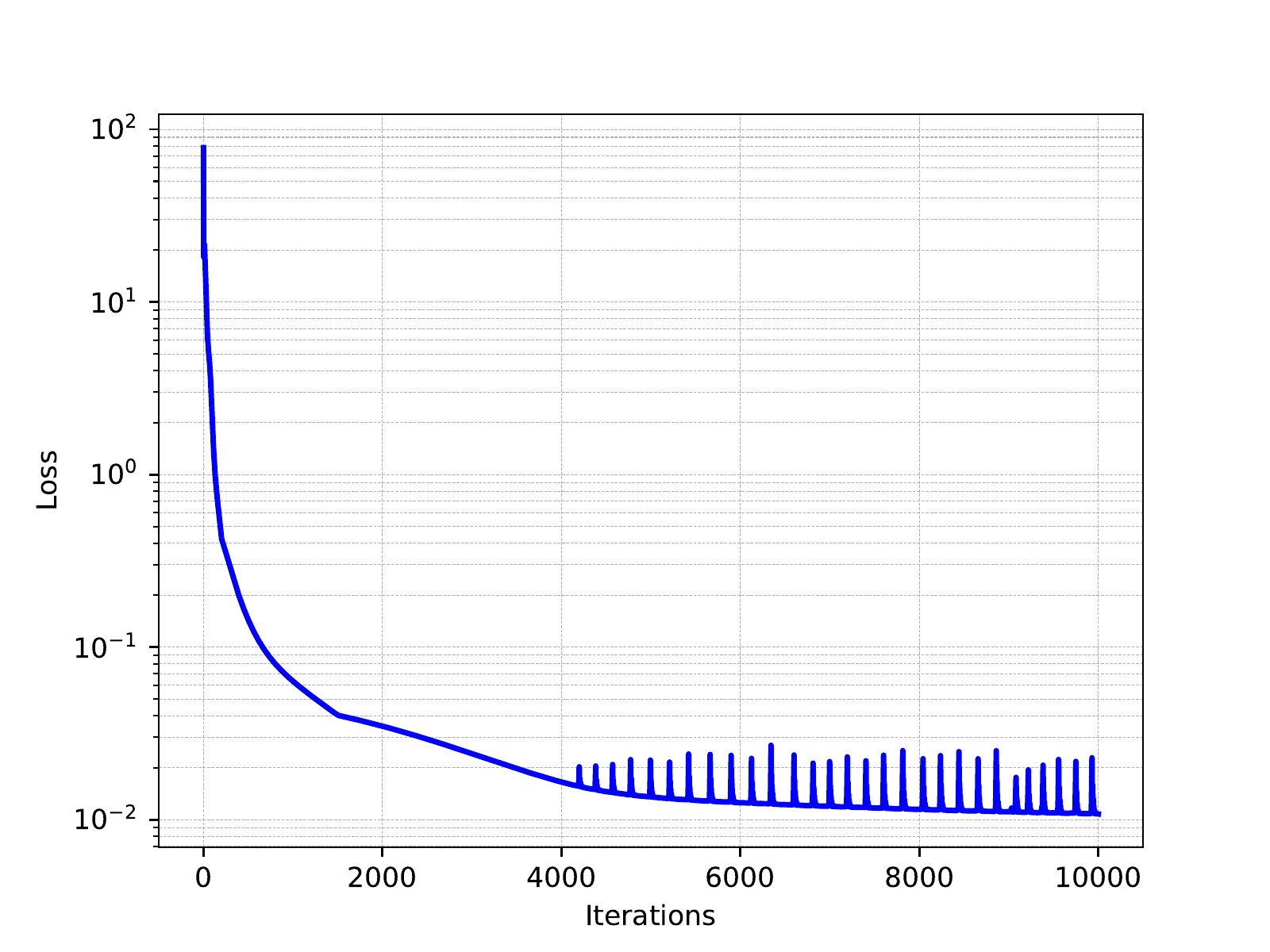}}%
\caption{Fitted results and training process of the susceptible--infected--susceptible model with complex periodic function: (a) Fitted result of the hybrid model of Equation \eqref{eq:SIS} with the generated data; (b) Fitted result of neural-network $\beta_{\theta}$ of the model of Equation \eqref{eq:SIS} with function $\beta_2$ in the generation model; (c) Absolute value of errors between $\beta_{\theta}$ and $\beta$; and (d) Change of loss function value over iterations.}
\label{fig:2}
\end{figure}

The first and second numerical experiments were based on the standard susceptible --infected--susceptible (SIS) model with birth and death data \cite{ma2009modeling}:
\begin{equation}
    \left\{
\begin{aligned}
    \frac{dS}{dt} &= \lambda -\beta(t) SI + \gamma I -dS, \\
    \frac{dI}{dt} &= \beta(t) SI - \gamma I - dI,
\end{aligned}\right.
\label{eq:SIS}
\end{equation}
where the $S$ is the number of susceptible individuals, and $I$ is the number of infected individuals at time $t$. Parameter $\lambda$ is the constant birth rate, $\gamma$ is the recovery rate, and $d$ is the death rate. Function $\beta(t)$ is the periodic function of time $t$ and is the infection function about the infection incidence. In the first experiment, we used the simply periodic function, $\beta_1(t) = 0.5\Big(\cos(t/11)+1\Big)$, as the infection function to generate data $D(t)$, and we used the hybrid model with the neural network as the $\beta(t)$ to fit these data. In the second experiment, the complex periodic function,

$$\beta_2(t) = 0.125\Big(-\cos(t/11)-\sin(t/11)-2.5\cos(2t/11)+0.5\sin(2t/11)\Big) + 0.5,$$ was used as the infection function. In the hybrid model of the second experiment, the neural network was used to fit the infection function. Equation \eqref{eq:SIS} is a special case of the system of Equation \eqref{eq:1}, with $S=x_S$ and $I=x_I$. In these two experiments, the neural-network models were the same. A three-layer neural network was used to equip the hybrid model with fully connected layers of $1\times 16$, $16\times 16$, and $16\times 1$ matrices and their related biases. Both activation functions in the first and second layers were the same: $\sin(ax)^2/a$ with $a=0.1$ \cite{ziyin2020neural}. The $R_0$ function used in the loss function of Equation \eqref{eq:m} was numerically written as $\frac{\sum_{i=1}^M\beta_{\theta}(i\Delta t)}{M(\gamma+d)}$ \cite{wang2008threshold}, where $\beta_{\theta}(t)$ is the fitted function of the neural network, and the constant, $M$, is the length of the time step. For hyperparameter $\alpha$ in Equation \eqref{eq:m}, we chose $300(\gamma+d)$.

Figures \ref{fig:1} and \ref{fig:2} illustrate the results of the first experiment regarding simple and complex periodic functions. During the training process of the first experiment, the initial learning rate was set to $1\times 10^{-2}$ and changed to $3.5\times 10^{-3}$ and $1.225\times 10^{-3}$ at the $250$th and $1,500$th iterations, respectively. During the training process of the second experiment, the initial learning rate was set to $1\times 10^{-2}$ and changed to $3\times 10^{-3}$ and $9\times 10^{-4}$ at the $200$th and $1,500$th iterations, respectively. In figures \ref{fig:1} and \ref{fig:2}, subfigures (a), (b), (c), and (d) present the respective fitted results of the hybrid model, the neural network, their errors, and the training process. From these two experiments, based on the loss function of Equation \eqref{eq:descrete}, we found that for the samples collected (blue points), the hybrid model fit the data well, and the neural network fit the infection function very well, which verifies the results of Theorem \ref{th:existence}. However, in the area lacking samples, the neural network did not fit the infection function well. Comparing the results of Figure \ref{fig:1} with Figure \ref{fig:2}, based on the same neural network model with the activation function, $\sin(ax)^2/a$, we can see that the complex periodic method produced better accuracy. During training, the simple periodic method had more vibrations. 

\begin{figure}[htbp]
  \centering
\subcaptionbox{Hybrid model fitted result.}{\includegraphics[width=0.48\textwidth]{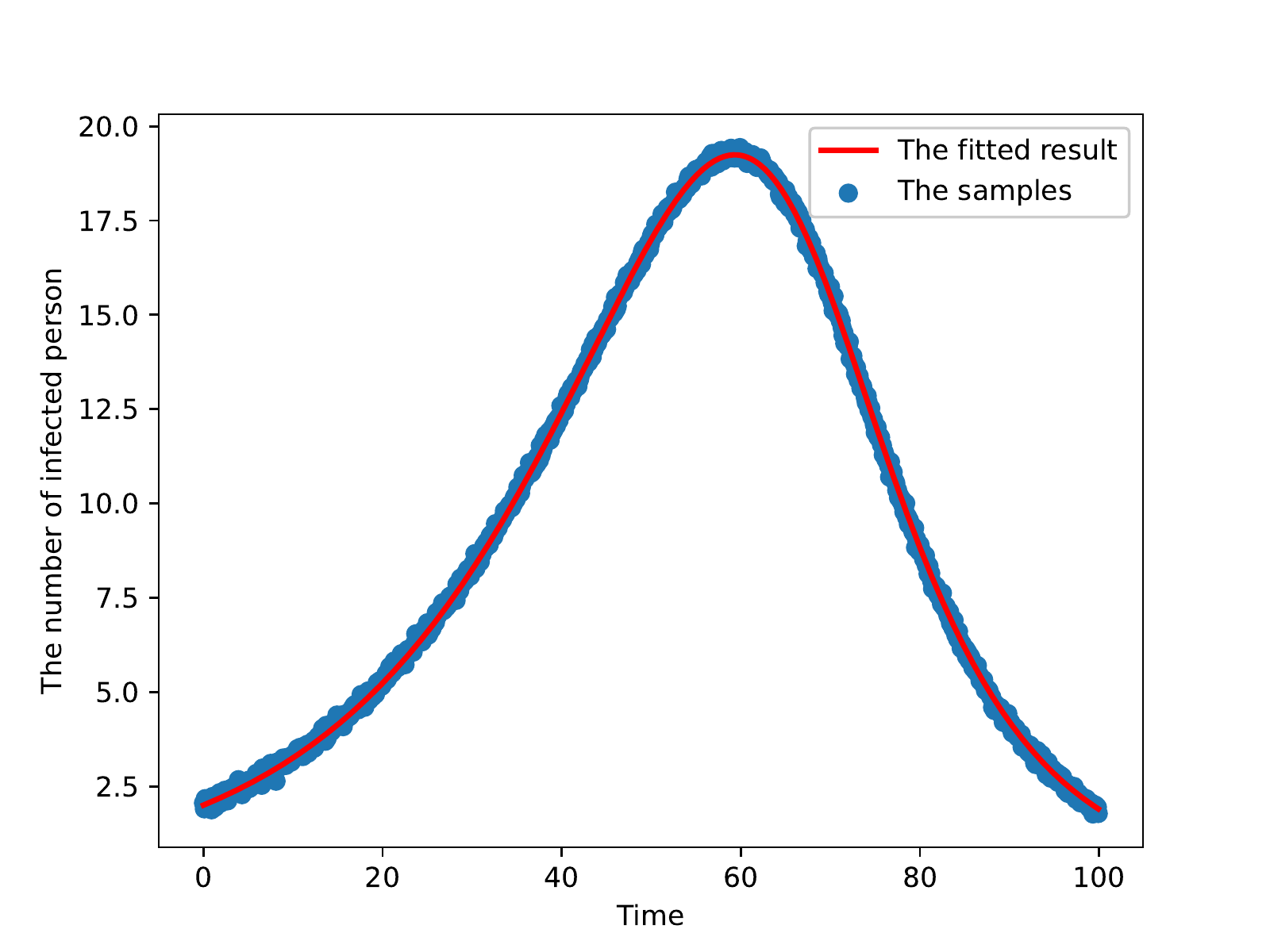}}%
\hfill
\subcaptionbox{Neural network fitted result.}{\includegraphics[width=0.48\textwidth]{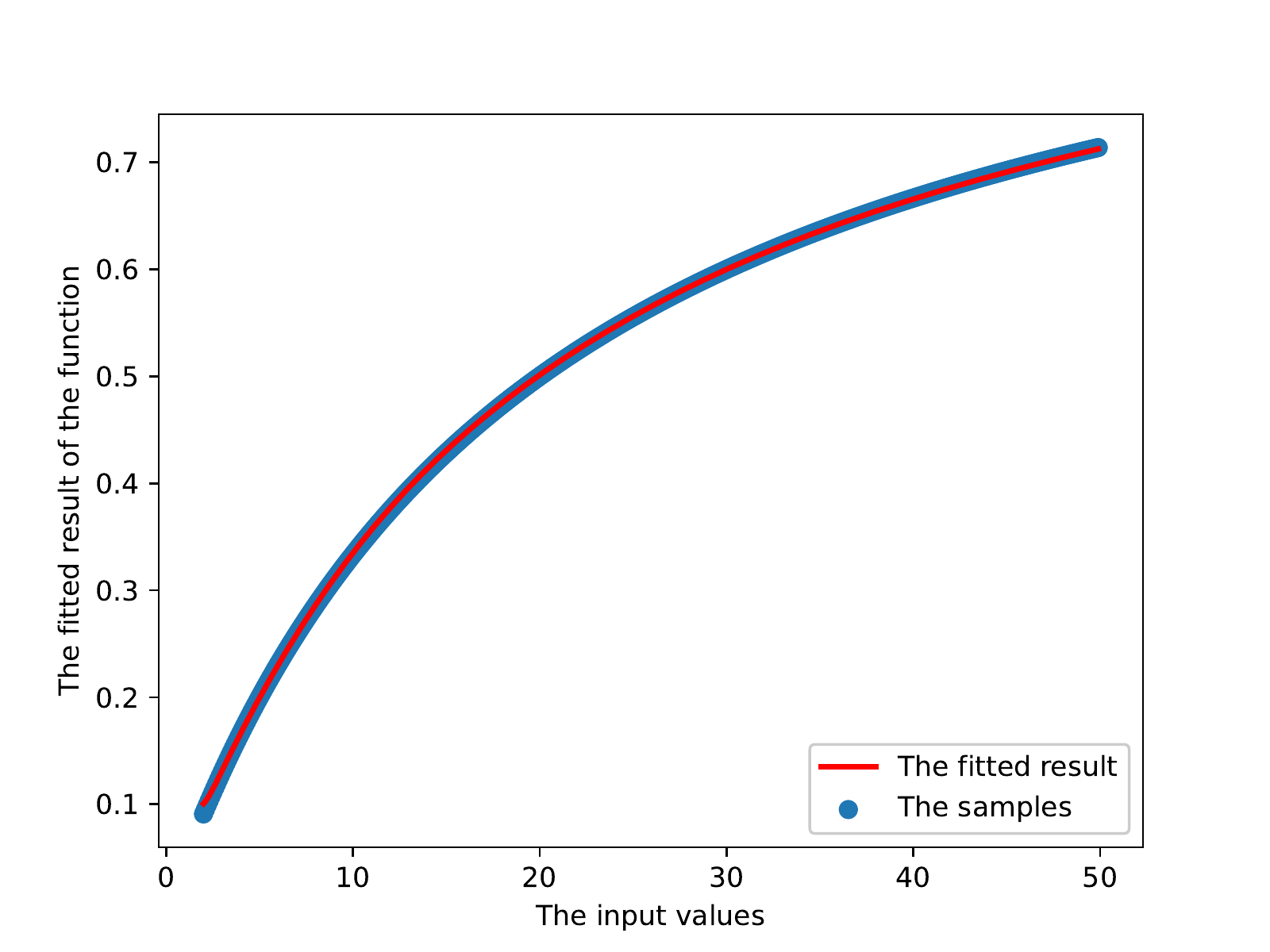}}%

\subcaptionbox{$|\beta(S)-\beta_{\theta}(S)|$.}{\includegraphics[width=0.48\textwidth]{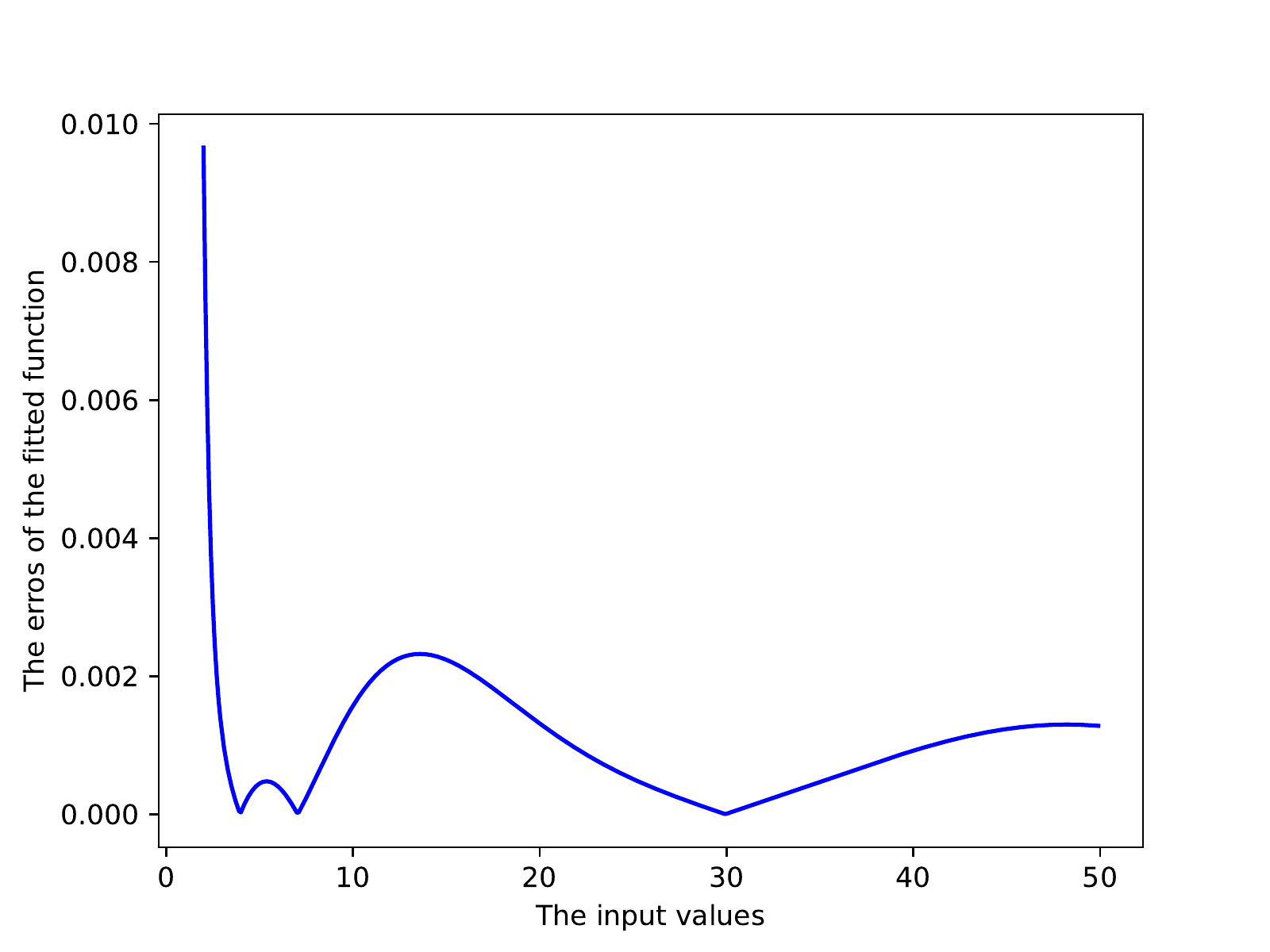}}%
\hfill
\subcaptionbox{Loss with respect to iterations.}{\includegraphics[width=0.48\textwidth]{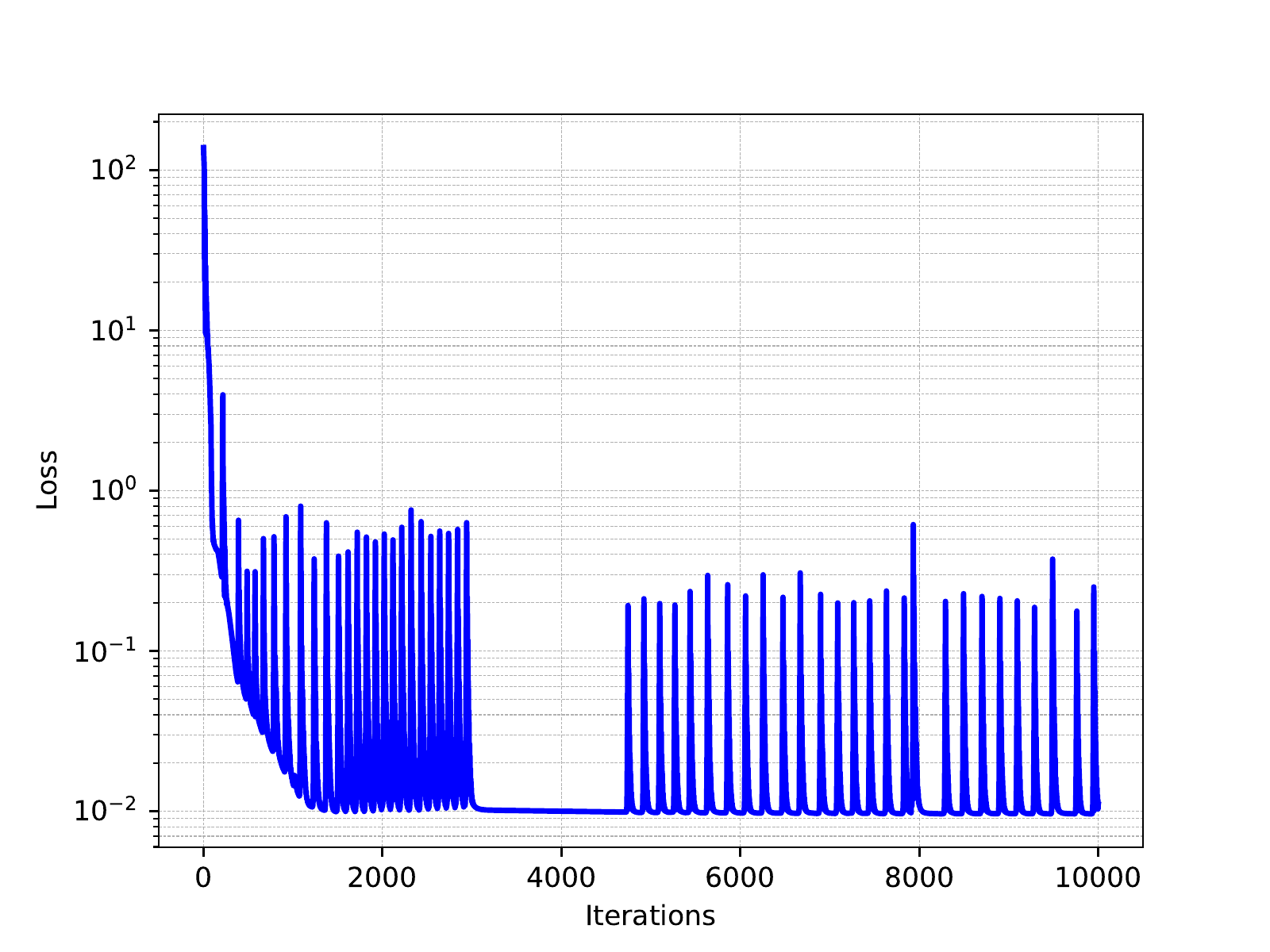}}%
\caption{Fitted results and training process of the susceptible--infected--recovered model with infection function $\beta(S)$: (a) Fitted result of the hybrid model of Equation \eqref{eq:SIRS} with the generated data; (b) Fitted result of neural-network $\beta_{\theta}$ of model of Equation \eqref{eq:SIRS} with function $\beta(S)$ in the generation model; (c) Absolute value of errors between $\beta_{\theta}$ and $\beta$; and (d) Change of loss function value over iterations.}
  \label{fig:3}
\end{figure}

\begin{figure}[htbp]
  \centering
\subcaptionbox{Hybrid model fitted result.}{\includegraphics[width=0.48\textwidth]{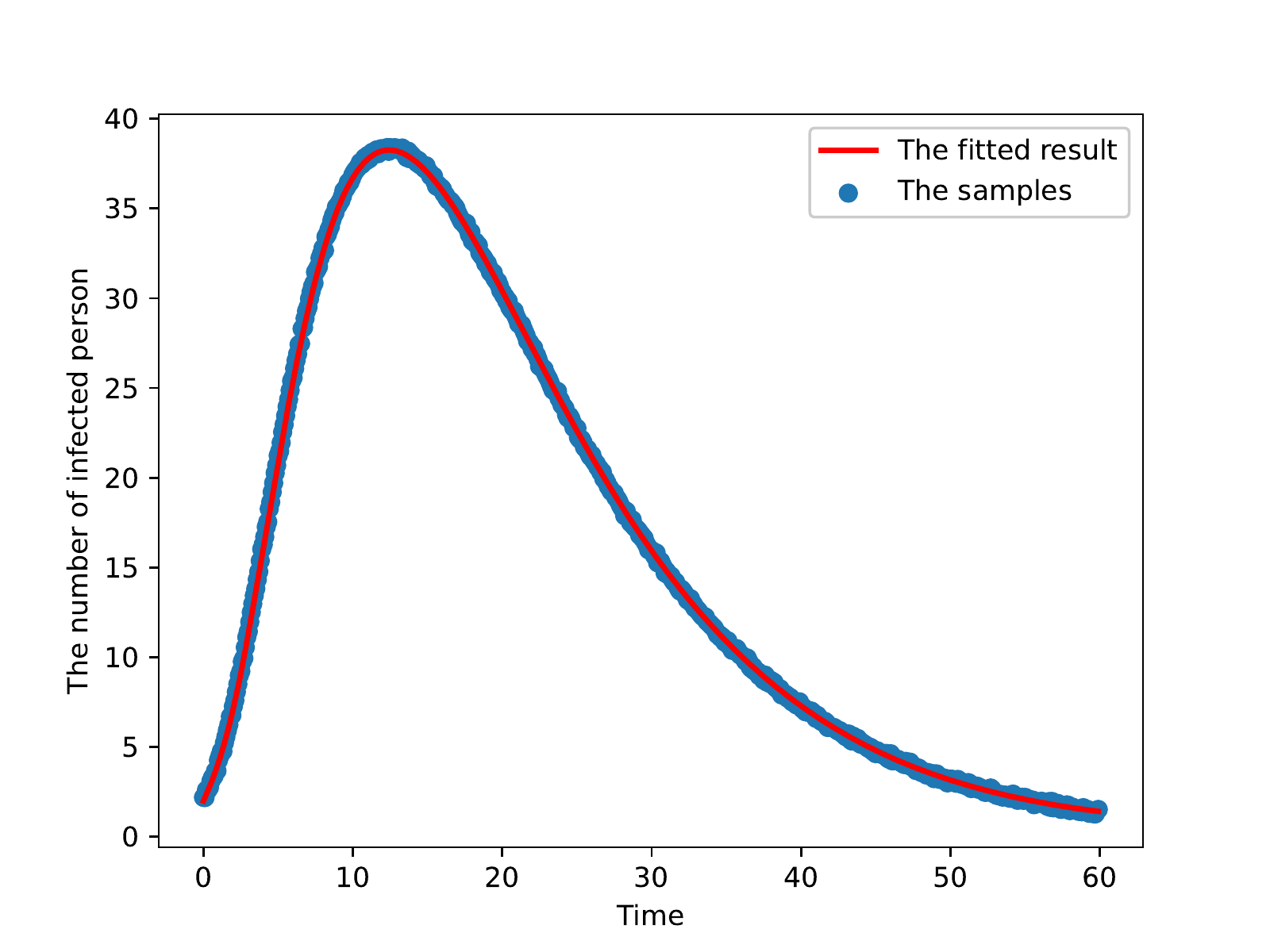}}%
\hfill
\subcaptionbox{Neural network fitted result.}{\includegraphics[width=0.48\textwidth]{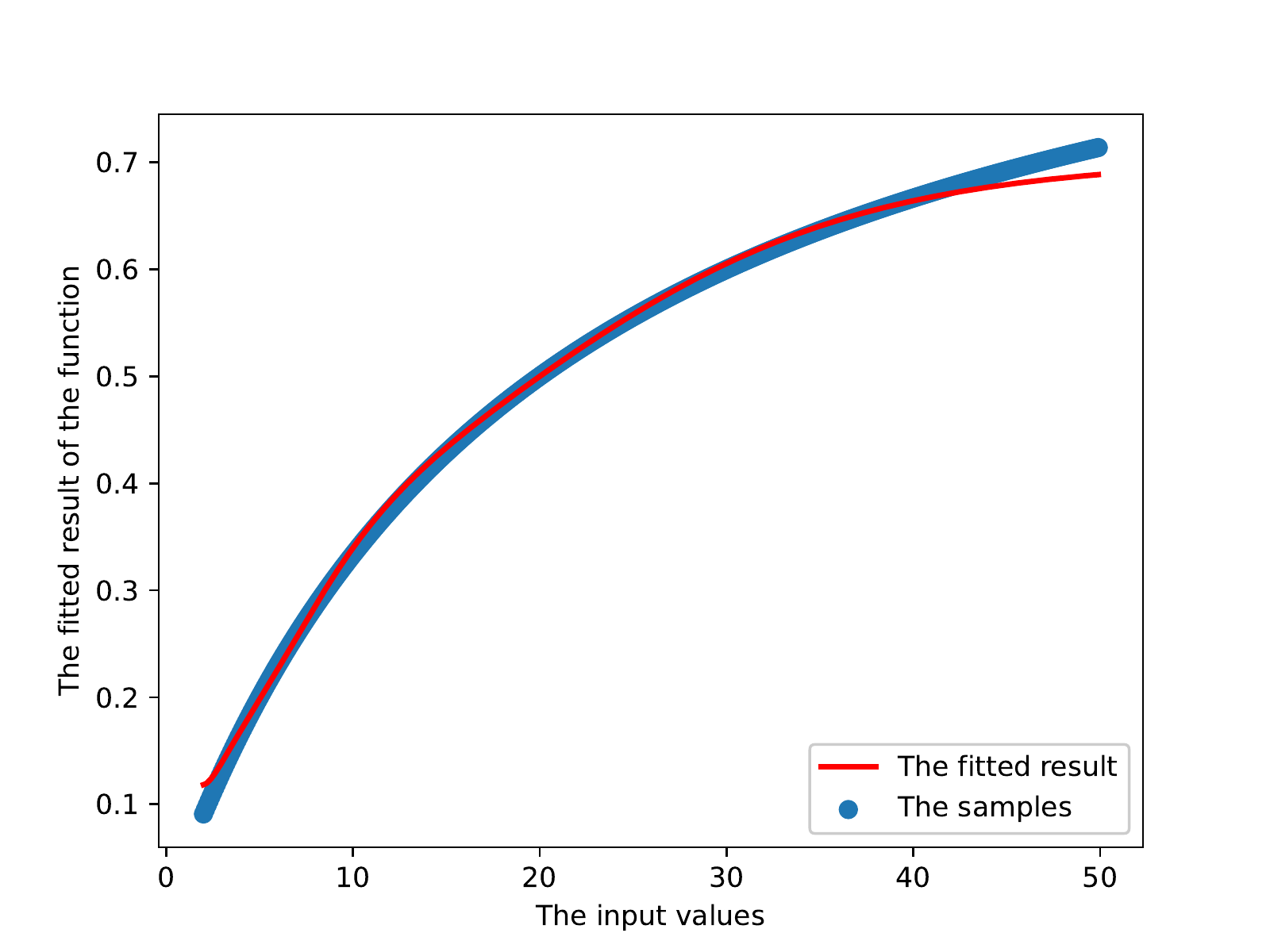}}%

\subcaptionbox{$|\beta(I)-\beta_{\theta}(I)|$.}{\includegraphics[width=0.48\textwidth]{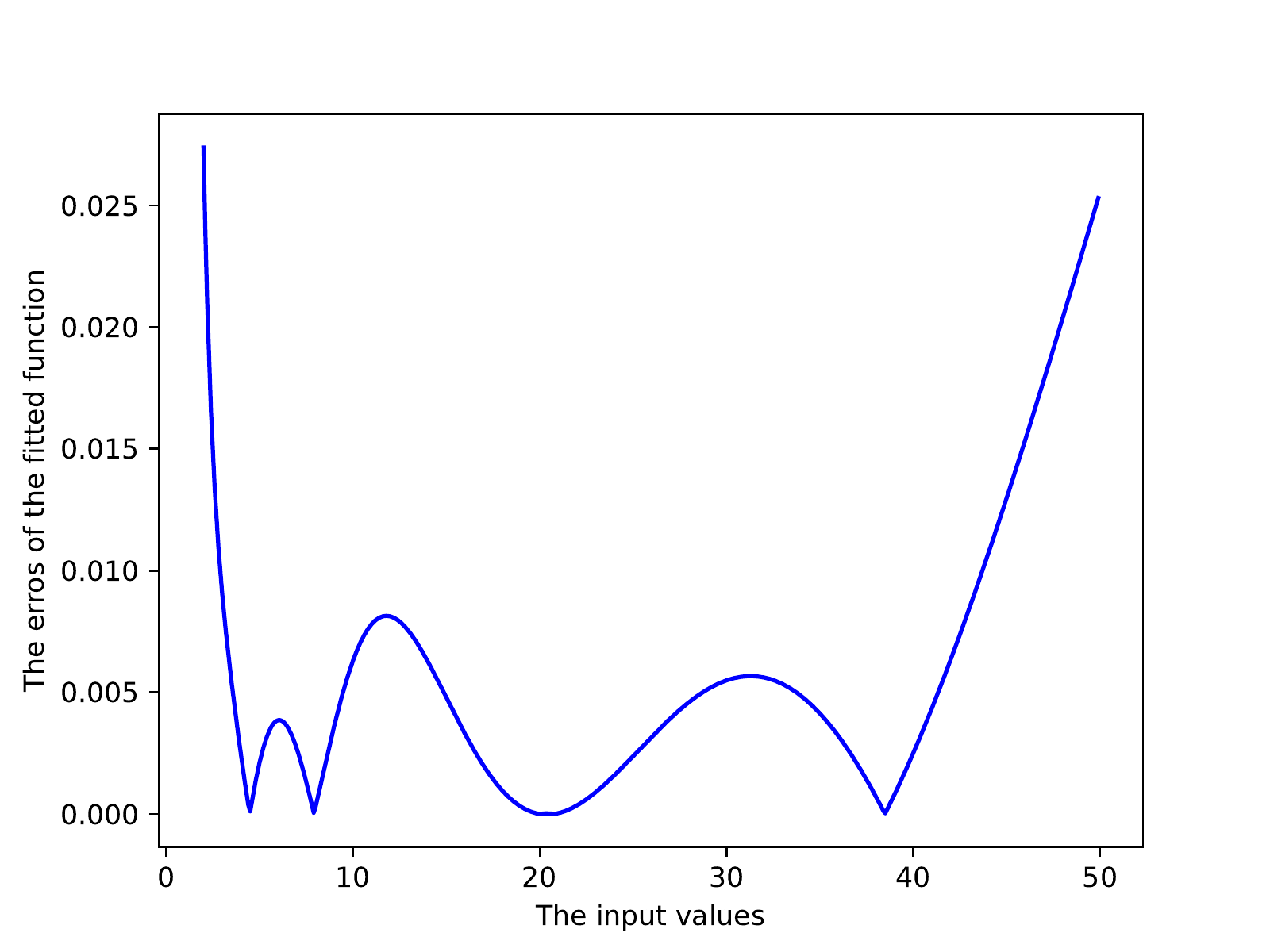}}%
\hfill
\subcaptionbox{Loss with respect to iterations.}{\includegraphics[width=0.48\textwidth]{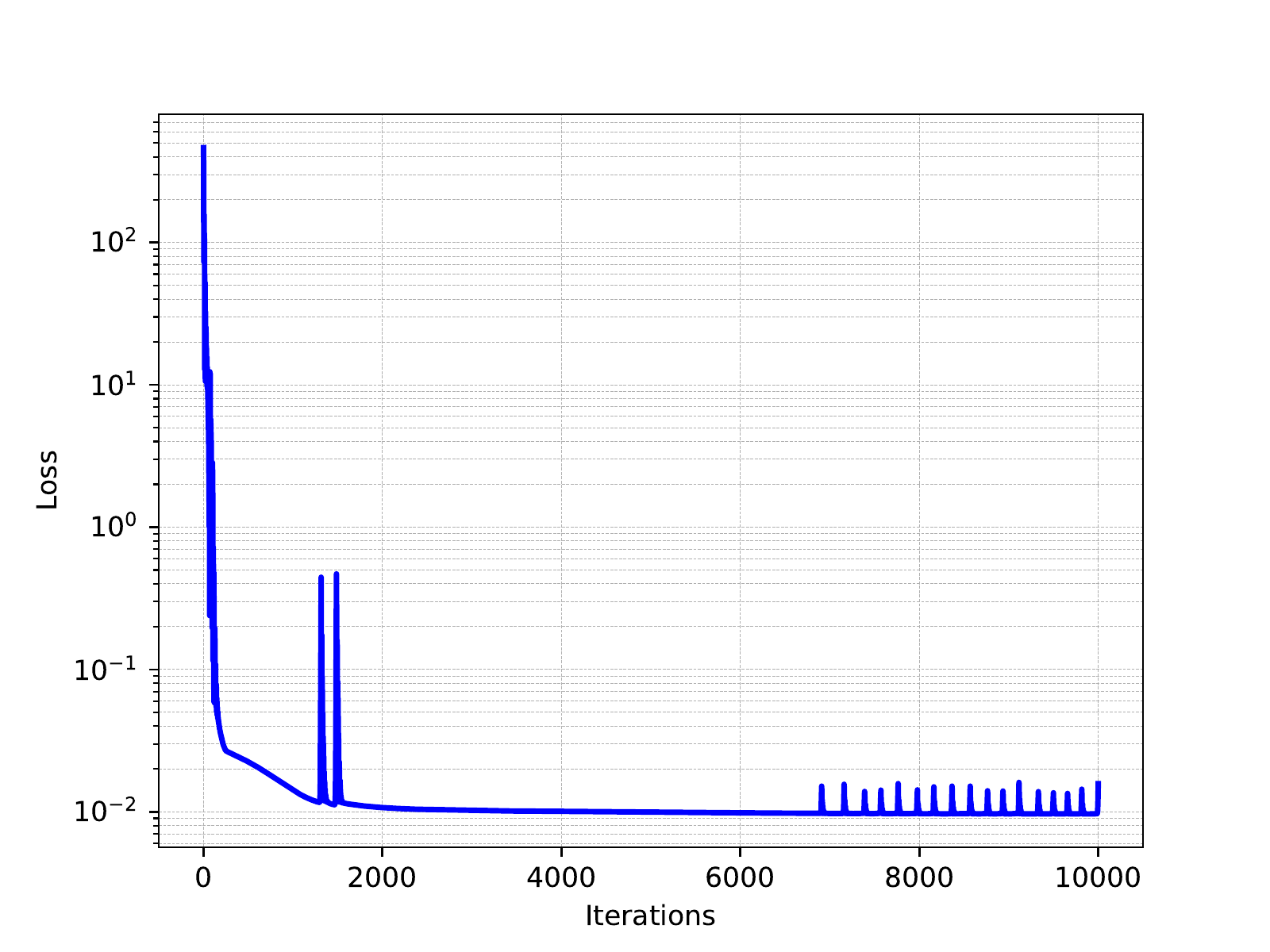}}%
\caption{Fitted results and training process of the susceptible--infected--recovered model with infection function $\beta(I)$: (a) Fitted result of the hybrid model of Equation \eqref{eq:SIRI} with the generated data; (b) Fitted result of neural-network $\beta_{\theta}$ of the model of Equation \eqref{eq:SIRI} with function $\beta(I)$ in the generation model; (c) Absolute value of errors between $\beta_{\theta}$ and $\beta$; and (d) Change of loss function value over iterations.}
  \label{fig:4}
\end{figure}

The third and fourth experiments were based on the susceptible--infected-- recovered  (SIR) model with different infection functions correlated with variables $S$ and $I$, which can be written as $\beta(S)$ and $\beta(I)$ in this case.
The model about the $\beta(S)$ is listed as
\begin{equation}
    \left\{
\begin{aligned}
    \frac{dS}{dt} &= -\beta(S)I, \\
    \frac{dI}{dt} &= \beta(S)I - \gamma I, \\
    \frac{dR}{dt} &=  \gamma I,
\end{aligned}\right.
\label{eq:SIRS}
\end{equation}
where $S$ and $I$ are same as those in the SIS model, and variable $R$ is the number of recovered individuals at time $t$. Parameter $\gamma$ is the recovery rate. This model is used to generate data and construct the hybrid model for the third experiment. Similarly, the model about $\beta(I)$ is listed as
\begin{equation}
    \left\{
\begin{aligned}
    \frac{dS}{dt} &= -\beta(I)S, \\
    \frac{dI}{dt} &= \beta(I)S - \gamma I, \\
    \frac{dR}{dt} &=  \gamma I.
\end{aligned}\right.
\label{eq:SIRI}
\end{equation}
This model is used in the fourth experiment. Infection incidences $\beta(S) = 0.2S/(S+20)$ and $\beta(I) = 0.2I/(I+20)$ appear in the third and fourth experiments, respectively. These two SIR models are equivalent to the previous ones and represent special cases of the system of Equation \eqref{eq:1} with $I=x_I$ and $S=x_S$. In both experiments, we used a neural-network model with three fully connected layers to fit the infection functions, and the activation functions in the first and second layers were the same: $\tanh(x)$. The three layers were constructed with $1\times 16$, $16\times 16$, and $16\times 1$ matrices and their related biases. In the loss function of Equation \eqref{eq:descrete}, the $R_0$ for the $\beta(S)$ system was $\frac{\beta_{\theta}(S_0)}{\gamma}$, and for the $\beta(I)$ system, it was $\frac{10(\beta_{\theta}(0.1)-\beta_{\theta}(0))S_0}{\gamma}$, where $S_0$ was the initial condition of the hybrid model, and function $\beta_{\theta}$ was the neural-network model. The hyperparameter, $\alpha$, was set to $300\gamma$.

Figures \ref{fig:3} and \ref{fig:4} respectively illustrate the results of the third experiment about the infection function, $\beta(S)$, and the fourth experiment about the infection function, $\beta(I)$. During the training process of the third experiment, the initial learning rate was set to $1\times 10^{-2}$ and changed to $4\times 10^{-3}$ and $1.6\times 10^{-3}$ at the $250$th and $3,000$th iterations, respectively. During the training process of the fourth experiment, the initial learning rate was set to $4\times 10^{-2}$ and changed to $1\times 10^{-2}$, $2.5\times 10^{-3}$, and $6.25\times 10^{-4}$ at the $250$th, $1,500$th, and $3,000$th iterations, respectively. In Figures \ref{fig:3} and \ref{fig:4},  subfigures (a), (b), (c), and (d) present the respective fitted results of the hybrid model, the neural network, their errors, and the training process, respectively. From the results of these two experiments, we can conclude that the hybrid model fit the data well, and the neural network model fit the infection function well. Furthermore, the distribution of the errors illustrate no obvious laws. Comparing Figure \ref{fig:3} with Figure \ref{fig:4}, during the training process, the process of estimating $\beta(S)$ had more vibrations. 

\section{Application to real data}

In this section, we apply the proposed methods to predict the ground-truth changes in historical COVID-19 infectivity. U.S. COVID-19 data were used, collected from the official World Health Organization (WHO, \url{https://covid.cdc.gov/covid-data-tracker/#trends_dailycases}) website. Time periods, June 16 to October 31, 2021, and December 12, 2021, to February 11, 2022, were used for model fitting. These periods present the two most recent case peaks in 2021--2022. During COVID-19 transmission, the dominant strains were delta and omicron, respective to the two peaks \cite{mohapatra2022twin}. To reduce the impact of decreased case detection on weekends and the subsequent increases on Mondays, a 7-day moving average was used. The data are available from the WHO website.

\begin{figure}[htbp]
  \centering
\subcaptionbox{Fitted result of the hybrid model of Equation \eqref{eq:6.2} during the first period.}{\includegraphics[width=0.48\textwidth]{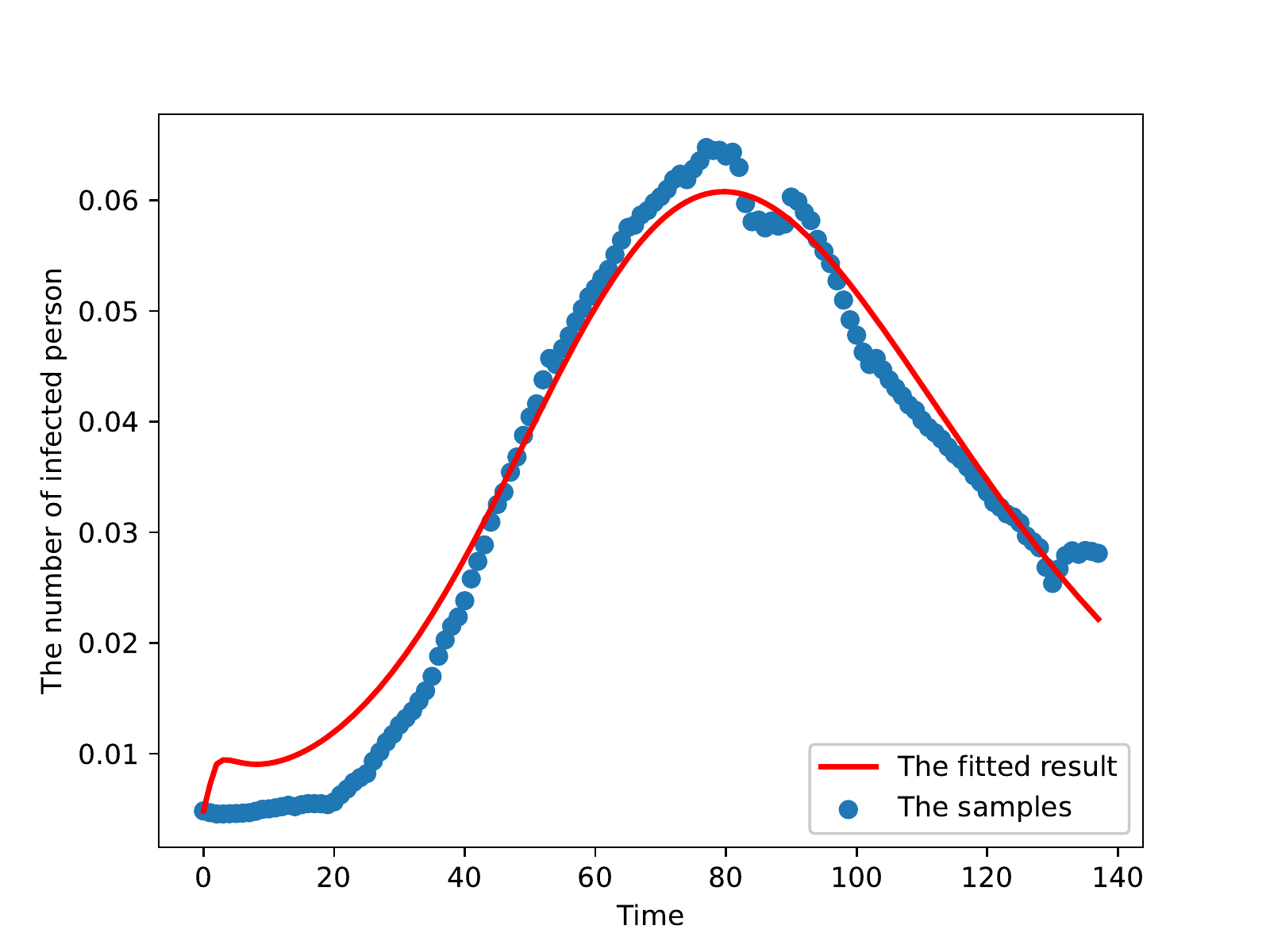}}%
\hfill
\subcaptionbox{Fitted result of the hybrid model of Equation \eqref{eq:6.2} during the second period.}{\includegraphics[width=0.48\textwidth]{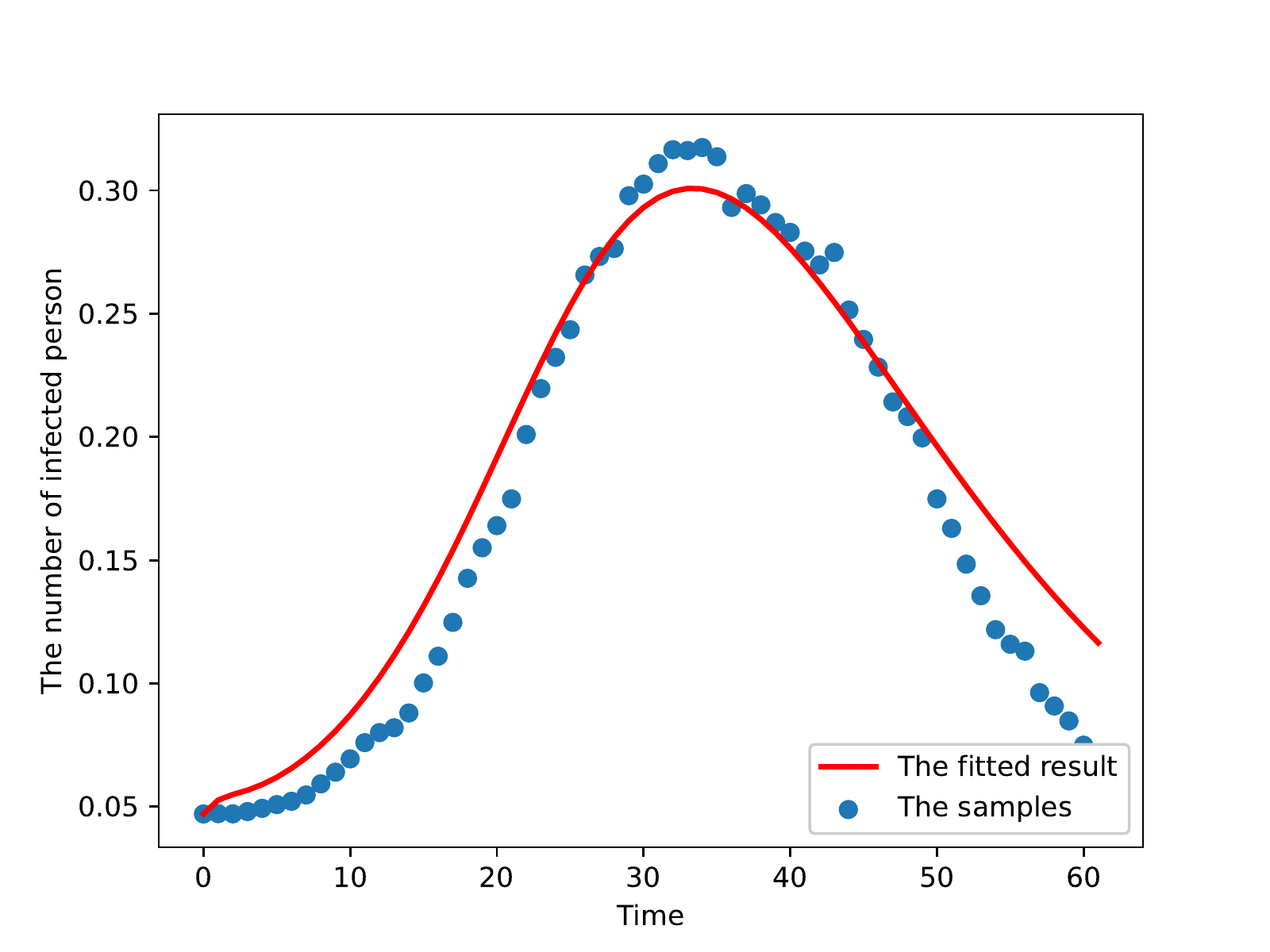}}%

\caption{Fitted results of the model of Equation \eqref{eq:6.2} with two datasets collected from (a) June 16 to October 31, 2021, and (b) December 12, 2021, to February 11, 2022, from U.S. COVID-19 case data. }
  \label{fig:7}
\end{figure}

To appropriately fit the data so that the infection function properties can be studied, we introduce a modified SIR model:
\begin{equation*}
\left\{
\begin{aligned}
    \frac{d\hat{S}}{dt} &=  -\beta\hat{S}I,\\
    \frac{dI}{dt} &= \beta\hat{S}I -(\gamma +d)I,
\end{aligned}\right.
\end{equation*}
where $\hat{S}=S\times 10^{-3}$; parameter $\gamma$ is the recovery rate, and $d$ is the death rate. In the model, the $\hat{S}$ is a variable of the infection function, $\beta\hat{S}I$, which shares its trend with $\beta SI$ of the standard SIR model. We also introduce a sufficiently large constant, $M$, such that for all times $t$, $\beta_M=M\beta$, $s=\hat{S}/M$, and $i=I/M$ follow $s(t)<1$ and $i(t)<1$ to produce the following model equation:
\begin{equation}
\left\{
\begin{aligned}
    \frac{ds}{dt} &=  -\beta_M si,\\
    \frac{di}{dt} &= \beta_M si -(\gamma +d)i.
\end{aligned}\right.
\label{eq:6.2}
\end{equation}
The initial values were $s_0=1-10^{-3}i_0$, $i_0=I_0/M$, and $i(t)=I(t)/M$. For calculation, we used $M=7.6N\times 10^{-3}$, where $N$ is the population of the U.S. During transmission, temperature changes and behavioral changes affected the infection rates of the two key virus strains. Infection rate $\beta$ is a time-related function, $\beta(t)$. Thus, in the above model of Equation \eqref{eq:6.2}, we can assume that $\beta=\beta(t)$ is the function that must be fitted by the neural network.

\begin{figure}[htbp]
\centering
  \includegraphics[width=0.5\textwidth]{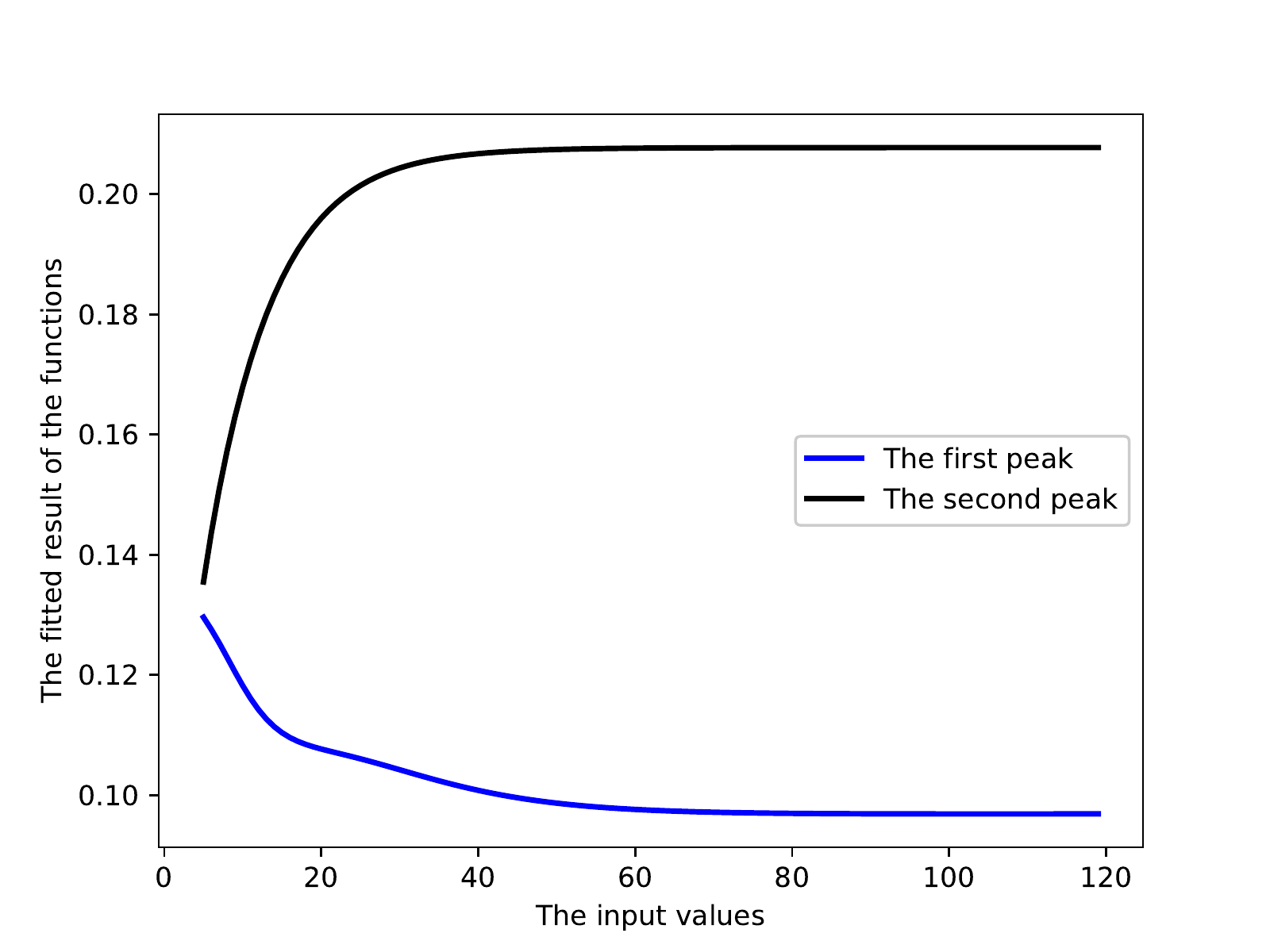}
  \caption{Fitted result of infection function $\beta(t)$ of the model of Equation \eqref{eq:6.2} using data collected from two time periods around the two peaks of the COVID-19 case data.}
\label{fig:8}
\end{figure}

Figures \ref{fig:7} (a) and (b) illustrate the model-fitted results of first and second peak U.S. COVID-19 case data, and Figure \ref{fig:8} shows the fitted results of the infection function of these two peaks. In the first period, the infectivity first decreased and then became constant. In the second period, the infectivity first increased and then became constant. However, after comparing the fitted results of the two infection functions, we can conclude that the change rate of the infection function in the second period was faster than that of the first. Similarly, the second period had a higher infection rate than the first. The decrease in the first period may have been caused by the increased number of vaccinated people, and the increase in the second may have been caused by the introduction omicron strain and the decrease in temperature. 

Through this practical application example, it was demonstrated that our method can interpret otherwise hidden information in limited real-world data, which not only increases application breadth but also greatly increases model adaptability. The proposed method establishes a theoretical basis for further research on infectious diseases and provides algorithmic support for the same.

\section{Conclusion and discussion}

In this work, by introducing the basic properties of an epidemic model and the forward bifurcation of an epidemic system, we explained the vanishing gradient conditions that result in the untrainability of the hybrid model. A novel loss function that combines MSE loss and bifurcation items was then constructed to overcome this issue. The existence conditions needed to ensure the estimability of the neural network were also discovered. Based on the standard fixed-step numerical method, we trained our hybrid model with generated data, and the numerical results verify its accurate estimability. To our knowledge, this is the first work to theoretically analyze and empirically demonstrate a trainable nonlinear hybrid model that accurately predicts the behavior of real-world behaviors. 

In Section 5, the results of four numerical experiments were described to demonstrate the applicability and efficacy of the hybrid model in estimating periodical and Holling type infection functions. All experiments show that the system of Equation \eqref{eq:1} can fit the data very well, validating Theorem \ref{th:existence}. We also provided the necessary parameters to complete the experiments alongside a specific implementation method and its published programming code, all of which may be used in related hybrid-model applications. In Section 6, we reported the application of our method using real COVID-19 data from the U.S., and the results showed that the proposed model indeed finds the hidden information behind limited real data, which increases its applicability to infectious disease models.

This work provides exciting new sights into the capabilities of hybrid neural networks in handling nonlinear problems via nonlinear ordinary differential equations. In a future work, we plan to focus on more complex but more widely used stochastic and partial differential equations. Such follow-on studies will extend the range of the application of hybrid models while enabling them to solve complex econometric, computational fluid mechanic, and computational chemistry problems. Furthermore, deep-learning models’ great dependency on manually labeled training data may be mitigated. As the hybrid model is based on many prior conditions, it can rely on smaller neural networks, which reduces the need for training data.

\bibliographystyle{plain}
\bibliography{references}
\end{document}